\providecommand{\MR}{\relax\ifhmode\unskip\space\fi MR }
\providecommand{\href}[2]{#2}
\theoremstyle{plain}
\newtheorem{thm}{Theorem}
\newtheorem{lem}[thm]{Lemma}
\newtheorem{prop}[thm]{Proposition}
\newtheorem{defn}[thm]{Definition}
\newtheorem{cor}[thm]{Corollary}
\newtheorem*{example}{Example}
\theoremstyle{definition}
\newtheorem{rem}{Remark}
\newtheorem*{thms}{Theorem}
\newtheorem*{clm}{Claim}
\newcommand{\disp}{\displaystyle}
\DeclareMathOperator{\dist}{dist}
\DeclareMathOperator{\proj}{proj}
\DeclareMathOperator{\Fav}{Fav}
\DeclareMathOperator{\FavC}{\Fav_{\mathcal{C}}}
\newcommand{\eps}{\varepsilon}
\newcommand{\vp}{\varphi}
\newcommand{\al}{\alpha}
\newcommand{\be}{\beta}
\newcommand{\ga}{\gamma}
\newcommand{\de}{\delta}
\newcommand{\te}{\theta}
\newcommand{\la}{\lambda}
\newcommand{\om}{\omega}
\newcommand{\nid}{\noindent}
\newcommand{\iny}{\infty}
\newcommand{\su}{\subset}
\newcommand{\abs}[1]{\left\vert#1\right\vert}
\newcommand{\set}[1]{\left\{#1\right\}}
\newcommand{\brac}[1]{\left[#1\right]}
\newcommand{\pr}[1]{\left( #1 \right) }
\newcommand{\pb}[1]{\left( #1 \right] }
\newcommand{\bp}[1]{\left[ #1 \right) }
\newcommand{\N}{\ensuremath{\mathbb{N}}}
\newcommand{\R}{\ensuremath{\mathbb{R}}}
\newcommand{\Z}{\ensuremath{\mathbb{Z}}}
\numberwithin{equation}{section}
\title{Upper and lower bounds on the rate of decay of the \\ Favard curve length for the four-corner Cantor set}
\author[Cladek, Davey, Taylor]{Laura Cladek \and Blair Davey \and Krystal Taylor}
\address{Laura Cladek, Department of Mathematics, University of California, Los Angeles}
\email{cladek@math.ucla.edu}
\thanks{Cladek is supported in part by the National Science Foundation Postdoctoral Fellowship 1703715.}
\address{Blair Davey, Department of Mathematics, City College of New York, CUNY}
\email{bdavey@ccny.cuny.edu}
\thanks{Davey is supported in part by the Simons Foundation Grant 430198.}
\address{Krystal Taylor, Department of Mathematics, The Ohio State University}
\email{taylor.2952@osu.edu}
\thanks{Taylor is supported in part by the Simons Foundation Grant 523555.}
\subjclass[2010]{28A80, 28A75, 28A78}
\keywords{Favard curve length, four-corner Cantor set, Buffon curve problem}
\date{}
\begin{document}

\begin{abstract}
The Favard length of a subset of the plane is defined as the average of its orthogonal projections.
This quantity is related to the probabilistic Buffon needle problem; that is, the Favard length of a set is proportional to the probability that a needle or a line that is dropped at random onto the set will intersect the set.
If instead of dropping lines onto a set, we drop fixed curves, then the associated Buffon curve probability is proportional to the so-called Favard curve length.
In this article, we estimate upper and lower bounds for the rate of decay of the Favard curve length of the four-corner Cantor set.
Our techniques build on the ideas that have been previously used for the classical Favard length.
\end{abstract}

\maketitle

\section{Introduction}

Let $E$ be a subset of the unit square $\brac{0, 1}^2$.
The {\bf Buffon needle problem} asks the likelihood that a needle, or a line, that is dropped at random onto the plane intersects the set $E$ given that it intersects $\brac{0,1}^2$.
More rigorously, we are seeking the probability that $\ell \cap E \ne \emptyset$, where $\ell$ is a line with independent, uniformly distributed orientation and distance from the origin after conditioning to the event that the line intersects $\brac{0,1}^2$.
This quantity is given by
$$\mathbf{P} := P\pr{\ell \cap E \ne \emptyset : \ell \textrm{ is any line in $\R^2$ for which } \ell \cap \brac{0,1}^2 \ne \emptyset}.$$
If we parametrize all such lines by letting $\ell_{\be, \om}$ denote the line passing through $\pr{0, \be}$ with direction orthogonal to $\om \in \mathbb{S}^1$, then
\begin{align*}
\mathbf{P} 
&\simeq \abs{\set{\pr{\be, \om} \in \R \times \mathbb{S}^1 : E \cap \ell_{\be, \om} \ne \emptyset}},
\end{align*}
where $\abs{\cdot}$ is used to denote the Lebesgue measure and $A \simeq B$ means that both $A \lesssim B$ and $B \lesssim A$ hold, where $A \lesssim B$ means that $A \le c B$ for a constant $c > 0$.
Observe that for a fixed $\om \in \mathbb{S}^1$, 
\begin{align*}
\set{\be \in \R : E \cap \ell_{\be, \om} \ne \emptyset} = \proj_{\om}\pr{E},
\end{align*}
where $\proj_{\om}\pr{S}$ denotes the linear projection of a set $S$ onto the angle $\om$.
An application of Fubini's theorem shows that
\begin{align}
\mathbf{P}
&\simeq \int_{\mathbb{S}^1} \abs{\set{\be \in \R : E \cap \ell_{\be, \om} \ne \emptyset}} d\om
= \int_{\mathbb{S}^1} \abs{\proj_{\om}\pr{E}} d\om
=: \Fav\pr{E}.
\end{align}
Therefore, the {\bf Favard length} is connected to the classical Buffon needle problem.

Now we ask what happens when lines are replaced by more general curves.
Let $\mathcal{C}$ denote a curve in $\R^2$.
We seek the probability that $\mathcal{C}$ intersects $E$ when $\mathcal{C}$ is dropped randomly onto the plane so that it intersects $\brac{0,1}^2$.
When the curve $\mathcal{C}$ is dropped, we allow for it to be translated but not rotated.
Assuming that $\mathcal{C}$ is of finite length, this probability satisfies
\begin{align*}
\mathbf{P}_{\mathcal{C}} 
&\simeq \abs{\set{\pr{\al, \be} \in \R^2 : E \cap \pr{\pr{\al, \be} + \mathcal{C}} \ne \emptyset}},
\end{align*}
where $\pr{\al, \be} + \mathcal{C} = \set{\pr{\al, \be} + z: z \in \mathcal{C}}$.
Observe that $E \cap \pr{\pr{\al, \be} + \mathcal{C}} \ne \emptyset$ iff $\pr{\al, \be} \in E - \mathcal{C}$, where $E - \mathcal{C} = \set{e - z : e \in E, z \in \mathcal{C}}$.
To draw a parallel between this problem and the classical Buffon needle problem, we introduce a family of curve projections.
Given $\al \in \R$ and $p \in \R^2$, let $\Phi_\al\pr{p}$ denote the set of $y$-coordinates of the intersection of  $p - \mathcal{C}$ with the line $x = \al$.
That is, 
\begin{equation}
\label{PaProj}
\Phi_\al\pr{p} = \set{\be \in \R : \pr{\al, \be} \in \pr{p - \mathcal{C}} \cap \set {x = \al}}.
\end{equation}
The map $\Phi_\al\pr{p}$ can be viewed as an analog of $\proj_\om$.
Given $\be \in \R$, the inverse set $\Phi_\al^{-1}\pr{\be} = \set{p : \be \in \Phi_\al\pr{p}}$ is given by $\pr{\al, \be} + \mathcal{C}$.
With this new notation, we see that
\begin{align*}
\mathbf{P}_{\mathcal{C}}
&\simeq \abs{\set{\pr{\al, \be} \in \R^2 : E \cap \Phi_\al^{-1}\pr{\be} \ne \emptyset}}.
\end{align*}
And for each fixed $\al \in \R$, we have
\begin{align*}
\set{\be \in \R :  E \cap \Phi_\al^{-1}\pr{\be} \ne \emptyset}
&= \Phi_\al\pr{E}.
\end{align*}
As above, an application of Fubini's theorem shows that
\begin{align*}
\mathbf{P}_{\mathcal{C}}
&\simeq \int_\R \abs{\set{\be :  E \cap \Phi_\al^{-1}\pr{\be} \ne \emptyset}} d\al
= \int_\R \abs{\Phi_\al\pr{E}} d\al
= \FavC \pr{E}.
\end{align*}
This shows that the {\bf Favard curve length} is comparable to the probability associated to the {\bf Buffon curve problem.}

Now we give the formal definition of the Favard curve length.

\begin{defn}[Favard curve length]
\label{FavC}
Let $\mathcal{C}$ be a curve in $\R^2$ with a family of curve projections defined as in \eqref{PaProj}.
If $E \su \R^2$, then the {\em Favard curve length of $E$} is given by 
\begin{equation*}
\FavC\pr{E} 
:= \abs{\set{\pr{\al, \be} \in \R^2 : \Phi_\al^{-1}\pr{\be} \cap E \ne \emptyset}}
= \int_{\R} \abs{\Phi_\al\pr{E}} d\al.
\end{equation*}
\end{defn}

\begin{rem}
\label{FavCLengthRem}
Observe that 
\begin{align*}
\FavC\pr{E} 
&= \abs{\set{\pr{\al, \be} \in \R^2 : \pr{\pr{\al, \be} + \mathcal{C}} \cap E \ne \emptyset}} \\
&= \abs{\set{\pr{\al, \be} \in \R^2 : \pr{\al, \be} \cap \pr{E +\pr{-\mathcal{C}}} \ne \emptyset}} 
= \abs{E + ( - \mathcal{C})}.
\end{align*}
Therefore, the Favard curve length can be interpreted as the measure of a sum set.
Moreover, although we introduced $\Phi_\al$ (the set of $y$-values of the intersection of $p - \mathcal{C}$ with a vertical line defined by $x = \al$) to define the Favard curve length, a version of Definition \ref{FavC} still holds for any other choice of orthogonal basis.
For example, we could define $\Psi_\be$ to be the set of $x$-values of the intersection of $p - \mathcal{C}$ with a horizontal line $y = \be$,
$$\Psi_\be\pr{p} = \set{\al \in \R : \pr{\al, \be} \in \pr{p - \mathcal{C}} \cap \set {y = \be}}.$$
Following the arguments from above, we would compute the Favard curve length by integrating over $\be$
to get that
$$\FavC\pr{E} 
:= \abs{\set{\pr{\al, \be} \in \R^2 : \Psi_\be^{-1}\pr{\be} \cap E \ne \emptyset}}
= \int_{\R} \abs{\Psi_\be\pr{E}} d\be.$$
\end{rem}

The Favard length is of interest because of its connection to the Buffon needle problem, but it also gives important information about the rectifiability of the set.
The Besicovitch projection theorem \cite{Bes39}, \cite{Fal80}, \cite[Theorem 6.13]{Fal85}, \cite[Theorem 18.1]{Mat95} states that if a subset $E$ of the plane has finite length in the sense of Hausdorff measure and is purely unrectifiable (so that its intersection with any Lipschitz graph has zero length), then almost every linear projection of $E$ to a line will have zero measure. 
This means that if $E$ is purely unrectifiable, then the Favard length of $E$ is zero.
In our companion paper \cite{DT21}, we study quantitative versions of this statement for the Favard curve length.
The results of \cite{DT21} (see also \cite{ST17} and \cite{HJJL} where nonlinear versions of the Besicovitch projection theorem were originally attained) imply the following:

\begin{thms}[Besicovitch generalized projection theorem]
Let $E \su \R^2$ be such that $\mathcal{H}^1\pr{E} \in \pr{0, \iny}$.
Assume that $\mathcal{C}$ is piecewise $C^1$ with a piecewise bi-Lipschitz continuous unit tangent vector.
If $E$ is purely unrectifiable, then $\FavC\pr{E} = 0$.
\end{thms}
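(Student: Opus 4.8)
The plan is to combine the integral representation of the Favard curve length with the classical Besicovitch projection theorem, using the tangent structure of $\mathcal{C}$ to pass between the nonlinear curve projections $\Phi_\al$ and ordinary linear projections. By Definition \ref{FavC} it suffices to show that $\abs{\Phi_\al\pr{E}} = 0$ for (Lebesgue) almost every $\al \in \R$, since then $\FavC\pr{E} = \int_\R \abs{\Phi_\al\pr{E}}\, d\al = 0$; equivalently, by Remark \ref{FavCLengthRem}, I would show that the sum set $E + \pr{-\mathcal{C}}$ is Lebesgue null.

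First I would reduce to a single well-behaved arc. Using that $\mathcal{C}$ is piecewise $C^1$ with a piecewise bi-Lipschitz unit tangent, decompose $\mathcal{C}$ into finitely many $C^1$ arcs $\mathcal{C}_j$ on each of which the unit tangent $\tau$ is bi-Lipschitz as a function of arc length. The bi-Lipschitz \emph{lower} bound is essential: it forces the Gauss map $s \mapsto \tau\pr{s}$ to be a bi-Lipschitz homeomorphism onto an arc $I_j \su \mathbb{S}^1$, so that $\mathcal{C}_j$ contains no line segment and its tangent direction genuinely sweeps out a nondegenerate range of directions with Jacobian comparable to $1$. After a rotation I may assume each $\mathcal{C}_j$ is a graph $\set{\pr{x, g_j\pr{x}}}$ with $g_j'$ bi-Lipschitz, so that on this piece $\Phi_\al\pr{p} = p_2 - g_j\pr{p_1 - \al}$ up to the natural domain restriction, and it suffices to prove $\abs{E + \pr{-\mathcal{C}_j}} = 0$ for each $j$.

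The heart of the argument is to dominate the curve projection by linear projections. At a point $p \in E$ lying on a translate of $\mathcal{C}_j$, the level set of $\Phi_\al$ through $p$ is that translated arc, whose tangent direction at $p$ is the curve tangent $\tau$ there; thus the linearization of $\Phi_\al$ at $p$ is exactly the linear projection $\proj_{\tau}$. I would make this quantitative by slicing $E + \pr{-\mathcal{C}_j}$ according to the tangent direction (equivalently the slope $g_j'$) at the witness intersection point and using the bi-Lipschitz Gauss map to change variables from $\al$ to the direction $\te \in I_j$. This is precisely the transversality input of the nonlinear Besicovitch projection theorems established in \cite{ST17}, \cite{HJJL}, and \cite{DT21}: the family $\set{\Phi_\al}$ is a regular family of projections whose fibers are the translates of $\mathcal{C}$, so the change of variables converts the statement $\abs{\Phi_\al\pr{E}} = 0$ for a.e. $\al$ into the statement $\abs{\proj_\te\pr{E}} = 0$ for a.e. $\te \in I_j$, with comparable null sets. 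The classical Besicovitch projection theorem then applies: since $E$ is purely unrectifiable with $\mathcal{H}^1\pr{E} \in \pr{0, \iny}$, one has $\abs{\proj_\te\pr{E}} = 0$ for almost every $\te \in \mathbb{S}^1$. Feeding this back yields $\abs{E + \pr{-\mathcal{C}_j}} = 0$ for each $j$, and summing over the finitely many pieces gives $\FavC\pr{E} = 0$.

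The main obstacle is the change of variables in the third step, that is, making the linearization rigorous when $E$ is purely unrectifiable and therefore has no tangents, and when $\Phi_\al$ is genuinely set-valued. Concretely, one must control the multiplicity coming from points of $E$ that meet several branches of a single translate of $\mathcal{C}$, verify that the slope map $x \mapsto g_j'\pr{x}$ (only Lipschitz with Lipschitz inverse, so $g_j''$ exists merely almost everywhere and is bounded above and below) gives a measure comparison $d\te \simeq dx$ with constants depending only on the bi-Lipschitz data, and patch the finitely many arcs together while handling near-vertical tangents via the alternate projection $\Psi_\be$ of Remark \ref{FavCLengthRem}. Once this transversality and co-area bookkeeping is in place, the reduction to the classical projection theorem is clean; I expect the cited results of \cite{DT21} to supply exactly this bookkeeping, so that the proof ultimately amounts to checking that the hypotheses on $\mathcal{C}$ guarantee the regularity and transversality required by those theorems.
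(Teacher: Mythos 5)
Your outline ends in the right place, but the step you call ``the heart of the argument'' contains a genuine gap. You assert that, after linearizing at a witness point and using the bi-Lipschitz Gauss map, a change of variables $\al \mapsto \te$ ``converts the statement $\abs{\Phi_\al\pr{E}} = 0$ for a.e.\ $\al$ into the statement $\abs{\proj_\te\pr{E}} = 0$ for a.e.\ $\te$, with comparable null sets.'' No such conversion exists at the level of a fixed $\al$: the map $\Phi_\al$ is a single nonlinear projection whose fibers are translates of $\mathcal{C}$, and distinct points of $E$ are projected along \emph{different} tangent directions, so there is no single angle $\te\pr{\al}$ with $\abs{\Phi_\al\pr{E}} \simeq \abs{\proj_{\te\pr{\al}}\pr{E}}$ once $E$ has positive diameter. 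The comparison is only local, and the paper's own quantitative Lemma \ref{projRelLemm} shows exactly how local: it compares $\abs{\Phi_\al\pr{\widetilde Q_j}}$ with a linear projection only for pieces $\widetilde Q_j$ of diameter $\sqrt{\de}$, precisely because the fiber slope varies by at most $\la \sqrt{\de}$ across such a piece, displacing the projection by $O\pr{\de}$, i.e.\ below the resolution scale; the change of variables in $\al$ is then performed with the base point $z_0$ frozen per piece, and the argument closes only because $\widetilde Q_j$ is an exact $\sqrt{\de}$-scaled copy of $K_{n/2}$, to which Theorem \ref{NPVThm} applies. None of this self-similar structure is available for a general purely unrectifiable $E$, and gluing the local comparisons over a fine covering of $E$ fails as bookkeeping: each piece acquires its own exceptional null set of directions from the classical Besicovitch theorem, the angle at which a fixed $\al$ ``sees'' a piece varies from piece to piece, and the number of pieces blows up as the covering is refined. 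Controlling those multiplicities and exceptional sets is exactly the transversality content of the Besicovitch--Federer theorem for transversal families of maps in \cite{HJJL}, the multiscale analysis of \cite{DT21}, or the energy method of \cite{BT21} --- it is the theorem, not a change of variables.

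That said, you correctly flag this step as the main obstacle and defer it to the cited results, and on that point you land where the paper does: the paper offers no independent proof of this statement at all, deriving it directly from the results of \cite{DT21} (see also \cite{ST17}, \cite{HJJL}). Your preliminary reductions --- passing to the sum set via Remark \ref{FavCLengthRem}, decomposing $\mathcal{C}$ into finitely many graph pieces with bi-Lipschitz derivative, handling near-vertical tangents with $\Psi_\be$ --- match the simplifications of Section \ref{Curves}, and your curvature bounds $\la^{-1} \le \abs{\vp''} \le \la$ are indeed what one checks to verify the transversality hypotheses of those references. So the repair is to delete the purported change-of-variables bridge and state the argument honestly: verify that $\set{\Phi_\al}$ is a transversal family in the sense of \cite{HJJL}/\cite{ST17}/\cite{DT21} and quote their Besicovitch-type theorem, which is precisely the paper's treatment.
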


Our paper \cite{DT21} follows the viewpoint of Tao \cite{Tao09} and uses multi-scale analysis to quantify the previous statement.
Roughly speaking, we show that if $E$ is close to being purely unrectifiable, then for an appropriate class of curves, the Favard curve length of $E$ will be very small.
In the current article, we seek to quantify this statement through a different viewpoint.
Let $\disp K = \bigcap_{n=1}^\iny K_n$ denote the four-corner Cantor set (which we rigorously introduce below).
Since $K$ is a compact, purely unrectifiable set with bounded, non-zero $\mathcal{H}^1$-measure, then $\FavC\pr{K} = 0$.
In particular, it follows from the dominated convergence theorem that $\disp \lim_{n \to \iny} \FavC\pr{K_n} = 0$.
Our current approach to quantifying the theorem stated above is to find upper and lower bounds for $\FavC\pr{K_n}$ as a function of $n$.

In recent years, there has been significant interest in determining rates of decay of the classical Favard length for fractal sets.
In \cite{PS02}, Peres and Solomyak proved that $\Fav\pr{K_n} \lesssim \exp\pr{- c \log_* n}$, where $\log_* y = \min \set{ m \ge 0 : \log^{m}\pr{y} \le 1}$ denotes the inverse tower function.
They also investigated Favard length bounds for other self-similar sets and random Cantor sets.
The upper bound of Peres and Solomyak was greatly improved by Nazarov, Peres, and Volberg in \cite{NPV10}, where they proved the following:  

\begin{thm}[Nazarov, Peres, Volberg]
\label{NPVThm}
For every $p > 6$, there exists $C>0$ such that for all $n \in \N$,
$$\Fav(K_n)\le C n^{-1/p}.$$
\end{thm}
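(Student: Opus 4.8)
The plan is to replace the set $K_n$ by its natural probability measure $\mu_n$ (uniform mass on the $4^n$ squares of side $4^{-n}$) and to study, for each direction $\theta$, the projected density $f_\theta := \proj_\theta \mu_n$, which is a probability density on the line perpendicular to $\theta$. Since $\proj_\theta(K_n)$ is precisely the support of $f_\theta$, the definition of Favard length (specialized to the linear projections $\proj_\om$, which are $\pi$-periodic in the angle) reads $\Fav(K_n) \simeq \int_0^\pi \abs{\set{f_\theta > 0}}\, d\theta$. Thus the whole problem is converted into showing that, for most $\theta$, the density $f_\theta$ — which always has total mass $1$ and lives on an interval of length $\simeq 1$ — must be supported on a set of small measure. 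The point is that as $n$ grows, overlaps among the projections of the many squares force $f_\theta$ to become tall and thin, shrinking its support.

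To estimate $\abs{\set{f_\theta > 0}}$ I would fix a density threshold $\lambda > 0$ and split $\set{f_\theta > 0} = \set{f_\theta \ge \lambda} \cup \set{0 < f_\theta < \lambda}$. The high-density part costs nothing: by Chebyshev, $\abs{\set{f_\theta \ge \lambda}} \le \lambda^{-1}\int f_\theta = \lambda^{-1}$. Everything therefore rests on the low-multiplicity part, the points covered by only a few of the $4^n$ squares. The tool for making this set small is self-similarity: $K_n$ is the union of four rescaled copies of $K_{n-1}$, so $f_\theta$ is an average of four rescaled translates of scale-$(n-1)$ projections, and this sets up an iteration running across the $n$ generations.

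The heart of the argument — and the main obstacle — is a single-scale non-concentration (overlap) estimate: for a definite proportion of directions the projections of the four sub-copies overlap substantially, so that refining from one scale to the next genuinely raises the typical density and thins out the low-multiplicity set. I would quantify the overlap through an $L^2$ bound. By Plancherel, $\int_0^\pi \norm{f_\theta}_{L^2}^2\, d\theta$ equals a suitably weighted integral of $\abs{\hat\mu_n(\xi)}^2$ over $\R^2$, which one estimates annulus by annulus using the explicit Riesz-product form of $\hat\mu_n$ (a product of cosines encoding the retained-corner digit structure). Since $\norm{f_\theta}_{L^2}^2$ counts coincidences among the projections of pairs of squares, this reduces the overlap question to an arithmetic estimate on how often the sums $\sum_k \epsilon_k\, \omega\cdot v_k$ attached to distinct corner configurations nearly agree; controlling this count uniformly, with the correct power, is exactly the delicate step, and the high-density points it produces are then discarded for free by the Chebyshev bound above.

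Finally I would assemble the per-scale gains, feeding the overlap estimate into the self-similar recursion to show that the low-multiplicity mass (and hence, after the thresholding, the low-multiplicity measure) contracts as one descends through the $n$ scales. Because the improvement available at a single scale is quantitatively weak and degrades with the threshold $\lambda$, iterating it over the $n$ generations and then optimizing $\lambda$ against $n$ yields a polynomial rather than an exponential rate; balancing the high-density contribution $\lambda^{-1}$ against the low-density contribution is what forces the exponent $1/p$ with $p > 6$. I expect the single-scale overlap estimate to be by far the hardest ingredient, both because it requires the arithmetic and Fourier-analytic input on coincidences of projections and because the precise critical value $6$ is dictated entirely by the efficiency of that estimate.
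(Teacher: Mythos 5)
First, note that the paper you were given does not prove this statement at all: Theorem \ref{NPVThm} is quoted verbatim from Nazarov--Peres--Volberg \cite{NPV10} and is used downstream as a black box (it is invoked at the end of the proof of Proposition \ref{wQjProp} to convert $\FavC\pr{\widetilde Q_j} \lesssim 2^{-n}\Fav\pr{K_{n/2}}$ into the stated decay). So the only meaningful comparison is with the argument of \cite{NPV10} itself. Measured against that, your sketch is a broadly faithful reconstruction of the high-level strategy: pass to the natural measure, threshold at height $\lambda$ and dispose of $\set{f_\theta \ge \lambda}$ by Chebyshev, attack the low-multiplicity set through the Riesz-product structure of $\hat\mu_n$, iterate through the scales using self-similarity, and balance the two contributions to produce the exponent $6$.

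However, there is a genuine gap, and it sits exactly where you put the weight. You propose to control the low-multiplicity part by bounding $\int_0^\pi \norm{f_\theta}_{L^2}^2\, d\theta$ via Plancherel. That quantity does not decay: for the four-corner Cantor set it grows like $n$, by the same pair-counting of the $4^n$ squares that this paper carries out in Section \ref{LBS} (Proposition \ref{fnL2Prop}) for the curve-counting function. An averaged energy bound of this kind, fed into Cauchy--Schwarz, yields precisely the \emph{lower} bound $\Fav(K_n) \gtrsim n^{-1}$ --- the Bateman--Volberg route, and the route this paper takes for Theorem \ref{lowerBd} --- but it is structurally incapable of bounding the measure of the support from above, since a mass-one function with large $L^2$ norm can still have support of measure comparable to $1$. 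What \cite{NPV10} actually needs, and what your sketch correctly flags as ``the delicate step'' but does not supply, is a direction-by-direction, frequency-localized statement: after factoring the Riesz product into a head (a trigonometric polynomial with few terms per scale) and a tail, one must show that the set of directions $\theta$ on which the head is small across a substantial range of frequencies has small measure. This rests on a Tur\'an/Nazarov-type lemma on the smallness sets of exponential sums with controlled coefficients, exploits the specific $4$-adic arithmetic of the corner configuration, and is then combined with a self-improvement bootstrap across the $n$ generations. Without that lemma --- which is the actual content of \cite{NPV10} --- the plan as written does not close, and the global Plancherel step as stated points in the wrong direction.
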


\nid
In \cite{LZ10}, \L aba and Zhai considered more general product Cantor sets of the form $\disp E = \bigcap E_n$ and showed that there exists $p \in \pr{6, \iny}$ so that $\Fav\pr{E_n} \lesssim n^{-1/p}$.
In a related direction, Bond and Volberg showed in \cite{BoV10} that $\Fav\pr{\mathcal{G}_n} \lesssim n^{-1/14}$, where $\mathcal{G}_n$ is a $3^{-n}$-approximation to the Sierpinski gasket.
With $\disp S = \bigcap S_n$ denoting a more general self-similar set, Bond and Volberg showed that $\Fav\pr{S_n} \le \exp\pr{- c \sqrt{ \log n}}$ in \cite{BoV12}.
All of these results were generalized by Bond, \L aba and Volberg in \cite{BLV14} where they considered self-similar rational product Cantor sets.
Under certain assumptions on $S$, it was shown that  $\Fav\pr{S_n} \le n^{-p/\log\log n}$, improving on the results of \cite{BoV12}.
For the four-corner Cantor set, the following lower bound was established by Bateman and Volberg in \cite{BaV10}. 

\begin{thm}[Bateman, Volberg]
\label{BVThm}
There exists $C>0$ such that for all $n \in \N$,
$$\Fav(K_n)\ge C n^{-1} \log n.$$
\end{thm}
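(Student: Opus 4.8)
The plan is to bound $\Fav\pr{K_n}$ from below by an $L^2$ (second moment) method and to extract the additional factor of $\log n$ from the multiscale self-similar structure of $K_n$. (Here $\Fav$ is the classical linear Favard length, not its curve analog.) Let $\mu_n$ be the normalized Lebesgue measure on $K_n$, giving mass $4^{-n}$ to each of the $4^n$ squares of side $4^{-n}$. Since $\proj_\theta$ and $\proj_{\theta+\pi}$ coincide, $\Fav\pr{K_n} \simeq \int_0^\pi \abs{\proj_\theta\pr{K_n}}\, d\theta$. For each $\theta$ the pushforward $\pr{\proj_\theta}_\#\mu_n$ is a probability measure supported on $\proj_\theta\pr{K_n}$ with a bounded density $g_\theta \in L^2$. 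Cauchy--Schwarz gives $1 = \int g_\theta \le \abs{\proj_\theta\pr{K_n}}^{1/2}\norm{g_\theta}_2$, so that $\abs{\proj_\theta\pr{K_n}} \ge \norm{g_\theta}_2^{-2}$ and
\[ \Fav\pr{K_n} \gtrsim \int_0^\pi \frac{d\theta}{\norm{g_\theta}_2^2}. \]

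Next I would compute the averaged second moment. By Plancherel, $\norm{g_\theta}_2^2 = \int_\R \abs{\widehat{\mu_n}\pr{r e_\theta}}^2\, dr$, where $e_\theta$ is the unit vector in direction $\theta$; integrating in $\theta$ and passing to polar coordinates yields
\[ \int_0^\pi \norm{g_\theta}_2^2\, d\theta = \int_{\R^2} \abs{\widehat{\mu_n}\pr{\xi}}^2 \frac{d\xi}{\abs{\xi}} \simeq I_1\pr{\mu_n} := \iint \frac{d\mu_n\pr{z}\,d\mu_n\pr{w}}{\abs{z-w}}. \]
Because $K_n$ is Ahlfors $1$-regular down to scale $4^{-n}$, the $1$-energy is logarithmically divergent and truncates at that scale, so $I_1\pr{\mu_n} \simeq \log\pr{4^n} \simeq n$. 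Combining this with the convexity of $t \mapsto 1/t$ (Jensen) already gives the weaker bound $\Fav\pr{K_n} \gtrsim 1/n$.

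The extra factor of $\log n$ must come from the fact that Jensen's inequality is lossy precisely because $\norm{g_\theta}_2^2$ is highly non-uniform: it is enormous near the rational directions of small height (at $\theta = 0$ all $x$-coordinates collapse onto a $2^n$-point set, so $\norm{g_\theta}_2^2 \simeq 2^n$) and far smaller for generic $\theta$. The self-similarity of $\mu_n$ produces the recursion
\[ \norm{g_\theta^{(n)}}_2^2 = \norm{g_\theta^{(n-1)}}_2^2 + \mathcal{E}_n\pr{\theta}, \qquad \mathcal{E}_n\pr{\theta} \ge 0, \]
in which the scale-$n$ overlap term $\mathcal{E}_n\pr{\theta}$ is controlled by the mutual separation of the four projected translates $\proj_\theta\pr{d}$, $d \in \set{0, 3/4}^2$, and hence by how well $\tan\theta$ is approximated by rationals. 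The plan is to decompose $\brac{0,\pi}$ according to this Diophantine type, to bound $\norm{g_\theta}_2^2$ on each piece using the recursion (substantial overlap accumulates only over those scales at which the rescaled copies genuinely collide), and to show that the resulting lower bound for $\int_0^\pi d\theta / \norm{g_\theta}_2^2$ reorganizes into a sum over rational heights $q$ behaving like a truncated harmonic series $\sum_q 1/q \simeq \log n$, against the normalization by $n$ coming from the energy.

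The essential difficulty, and the technical heart of the argument, is making this per-direction analysis quantitative: one must control $\mathcal{E}_n\pr{\theta}$ at every scale as an explicit function of the arithmetic of $\theta$, bound both the size of $\norm{g_\theta}_2^2$ and the measure of the exceptional set of directions near each rational, and then sum these contributions over all scales and all relevant rationals without over- or under-counting. It is this bookkeeping, rather than the soft second-moment reduction, where the real work lies, and it is exactly the mechanism that converts the trivial $1/n$ into $n^{-1}\log n$.
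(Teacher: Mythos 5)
First, a point of comparison with the paper: the paper does not prove Theorem \ref{BVThm} at all --- it is quoted from Bateman and Volberg \cite{BaV10} and used as a black box. The only place the paper runs a related argument is Section \ref{LBS}, where exactly the soft second-moment method you begin with is adapted to curve projections, and there it yields only $n^{-1}$; the authors explicitly remark that they do not know whether the $\log n$ factor can be recovered in their setting. So your proposal has to be judged against \cite{BaV10} itself. Your first half is correct and classical: Cauchy--Schwarz gives $\abs{\proj_\te\pr{K_n}} \ge \norm{g_\te}_2^{-2}$, the Plancherel/polar identity $\int_0^\pi \norm{g_\te}_2^2\, d\te \simeq I_1\pr{\mu_n} \simeq n$ is right, and Jensen then gives Mattila's bound $\Fav\pr{K_n} \gtrsim n^{-1}$. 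The self-similarity recursion $\norm{g_\te^{(n)}}_2^2 = \norm{g_\te^{(n-1)}}_2^2 + \mathcal{E}_n\pr{\te}$ with $\mathcal{E}_n \ge 0$ is also correct: the diagonal terms of the four rescaled translates reproduce the previous second moment, and the cross-correlation terms are nonnegative.

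The gap is that everything which makes the theorem true beyond $1/n$ is only announced, and your recursion by itself points the wrong way. What is actually needed is a distributional \emph{upper} bound: for each dyadic $\la$ with $1 \le \la \le n$, a set of directions $E_\la$ with $\abs{E_\la} \gtrsim \la^{-1}$ on which $\norm{g_\te}_2^2 \lesssim n/\la$; then $\int_{E_\la} \norm{g_\te}_2^{-2}\, d\te \gtrsim n^{-1}$ for each of the $\simeq \log n$ dyadic classes, and summing gives $n^{-1}\log n$. But the nonnegativity of $\mathcal{E}_n$ only yields \emph{lower} bounds on $\norm{g_\te}_2^2$, which are useless for lower-bounding $\int \norm{g_\te}_2^{-2}\, d\te$. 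To obtain the needed upper bounds you must control $\mathcal{E}_k\pr{\te}$, i.e. the autocorrelation of $g^{(k-1)}$ evaluated at the points $4\pr{p_d - p_{d'}}$, from above --- and that requires quantitative control of the very object being estimated, a bootstrapping problem that is the entire content of Bateman--Volberg's paper. They resolve it not through this Fourier recursion but by a direct combinatorial count, at each scale, of pairs of squares whose projections interact, carried out on explicitly constructed sets of directions; without that lemma (or a substitute you would have to supply), your argument proves only $\Fav\pr{K_n} \gtrsim n^{-1}$, as you essentially concede. A smaller slip: the $\log n$ arises as $\simeq \log n$ dyadic classes each contributing $\simeq n^{-1}$, not literally as a truncated harmonic series $\sum_q 1/q$ over rational heights, though the two accountings are related in spirit.
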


\nid
The upper and lower bounds described by Theorems \ref{NPVThm} and \ref{BVThm}, respectively, are currently the best known results.

A variation of the classical Favard length was considered by Bond and Volberg in \cite{BoV11}.
Their so-called ``circular Favard length" replaces linear projections of $K_n$ with circular projections of $K_n$, where the radius of each circle depends on $n$.
We point out that our approach is different from theirs since our curves do not vary with $n$.
Since the Favard curve length may be interpreted as the measure of the sum set $E + \pr{- \mathcal{C}} = E - \mathcal{C}$ (see Remark \ref{FavCLengthRem}), our work is closely related to the ideas in \cite{ST17} and \cite{ST20} where the dimension, measure, and interior of such sum sets were studied.
Simon and Taylor showed that the 2-dimensional Lebesgue measure of $E + \mathbb{S}^1$ equals zero iff $E$ is an irregular $1$-set.
In our language, this means that $\Fav_{\mathbb{S}^1}\pr{E} = 0$ iff $E$ is a purely unrectifiable $1$-set with finite, non-zero measure.

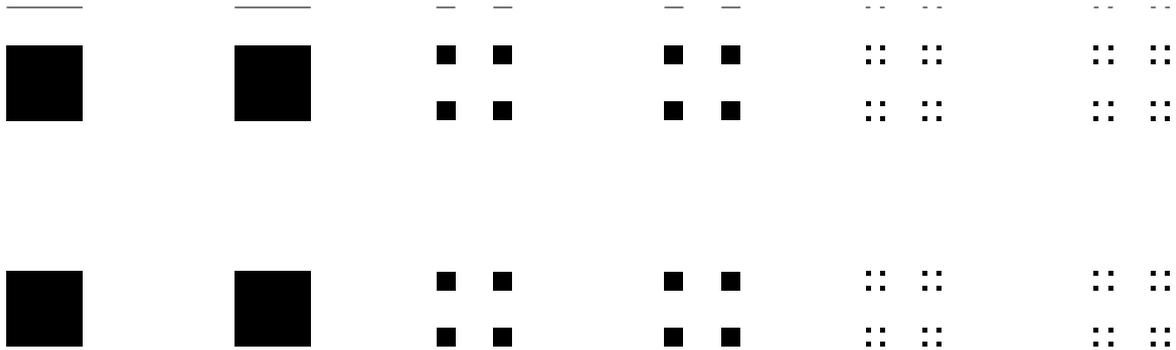
\begin{figure}[h]
\begin{tikzpicture}
\fill[black]
 (0, 0) -- (0, 1) --
(0, 1) -- (1, 1) --
(1, 1) -- (1, 0) -- 
(1, 0) -- (0, 0) -- 
   cycle;
\fill[black]
 (3, 0) -- (3, 1) --
(3, 1) -- (3, 1) --
(4, 1) -- (4, 0) -- 
(4, 0) -- (3, 0) -- 
   cycle;
\fill[black]
 (0, 3) -- (0, 4) --
(0, 4) -- (1, 4) --
(1, 4) -- (1, 3) -- 
(1, 3) -- (0, 3) -- 
   cycle;
\fill[black]
 (3, 3) -- (3, 4) --
(3, 4) -- (4, 4) --
(4, 4) -- (4, 3) -- 
(4, 3) -- (3, 3) -- 
   cycle;
\draw (0, 4.5) -- (1, 4.5);
\draw (3, 4.5) -- (4, 4.5);
\end{tikzpicture}
\qquad\qquad
\begin{tikzpicture}
\fill[black]
 (0, 0) -- (0, .25) --
(0, .25) -- (.25, .25) --
(.25, .25) -- (.25, 0) -- 
(.25, 0) -- (0, 0) -- 
   cycle;
\fill[black]
 (.75, 0) -- (.75, .25) --
(.75, .25) -- (.75, .25) --
(1, .25) -- (1, 0) -- 
(1, 0) -- (.75, 0) -- 
   cycle;
\fill[black]
 (0, .75) -- (0, 1) --
(0, 1) -- (.25, 1) --
(.25, 1) -- (.25, .75) -- 
(.25, .75) -- (0, .75) -- 
   cycle;
\fill[black]
 (.75, .75) -- (.75, 1) --
(.75, 1) -- (1, 1) --
(1, 1) -- (1, .75) -- 
(1, .75) -- (.75, .75) -- 
   cycle;
\fill[black]
 (3, 0) -- (3, .25) --
(3, .25) -- (3.25, .25) --
(3.25, .25) -- (3.25, 0) -- 
(3.25, 0) -- (3, 0) -- 
   cycle;
\fill[black]
 (3.75, 0) -- (3.75, .25) --
(3.75, .25) -- (3.75, .25) --
(4, .25) -- (4, 0) -- 
(4, 0) -- (3.75, 0) -- 
   cycle;
\fill[black]
 (3, .75) -- (3, 1) --
(3, 1) -- (3.25, 1) --
(3.25, 1) -- (3.25, .75) -- 
(3.25, .75) -- (3, .75) -- 
   cycle;
\fill[black]
 (3.75, .75) -- (3.75, 1) --
(3.75, 1) -- (4, 1) --
(4, 1) -- (4, .75) -- 
(4, .75) -- (3.75, .75) -- 
   cycle;
\fill[black]
 (0, 3) -- (0, 3.25) --
(0, 3.25) -- (.25, 3.25) --
(.25, 3.25) -- (.25, 3) -- 
(.25, 3) -- (0, 3) -- 
   cycle;
\fill[black]
 (.75, 3) -- (.75, 3.25) --
(.75, 3.25) -- (.75, 3.25) --
(1, 3.25) -- (1, 3) -- 
(1, 3) -- (.75, 3) -- 
   cycle;
\fill[black]
 (0, 3.75) -- (0, 4) --
(0, 4) -- (.25, 4) --
(.25, 4) -- (.25, 3.75) -- 
(.25, 3.75) -- (0, 3.75) -- 
   cycle;
\fill[black]
 (.75, 3.75) -- (.75, 4) --
(.75, 4) -- (1, 4) --
(1, 4) -- (1, 3.75) -- 
(1, 3.75) -- (.75, 3.75) -- 
   cycle;
\fill[black]
 (3, 3) -- (3, 3.25) --
(3, 3.25) -- (3.25, 3.25) --
(3.25, 3.25) -- (3.25, 3) -- 
(3.25, 3) -- (3, 3) -- 
   cycle;
\fill[black]
 (3.75, 3) -- (3.75, 3.25) --
(3.75, 3.25) -- (3.75, 3.25) --
(4, 3.25) -- (4, 3) -- 
(4, 3) -- (3.75, 3) -- 
   cycle;
\fill[black]
 (3, 3.75) -- (3, 4) --
(3, 4) -- (3.25, 4) --
(3.25, 4) -- (3.25, 3.75) -- 
(3.25, 3.75) -- (3, 3.75) -- 
   cycle;
\fill[black]
 (3.75, 3.75) -- (3.75, 4) --
(3.75, 4) -- (4, 4) --
(4, 4) -- (4, 3.75) -- 
(4, 3.75) -- (3.75, 3.75) -- 
   cycle;
\draw (0, 4.5) -- (0.25, 4.5);
\draw (0.75, 4.5) -- (1, 4.5);
\draw (3, 4.5) -- (3.25, 4.5);
\draw (3.75, 4.5) -- (4, 4.5);
\end{tikzpicture}
\qquad\qquad
\begin{tikzpicture}
\fill[black]
 (0, 0) -- (0, .0625) --
(0, .0625) -- (.0625, .0625) --
(.0625, .0625) -- (.0625, 0) -- 
(.0625, 0) -- (0, 0) -- 
   cycle;
\fill[black]
 (0.1875, 0) -- (0.1875, .0625) --
(0.1875, .0625) -- (.25, .0625) --
(.25, .0625) -- (.25, 0) -- 
(.25, 0) -- (0.1875, 0) -- 
   cycle;
\fill[black]
 (0, 0.1875) -- (0, .25) --
(0, .25) -- (.0625, .25) --
(.0625, .25) -- (.0625, 0.1875) -- 
(.0625, 0.1875) -- (0, 0.1875) -- 
   cycle;
\fill[black]
 (0.1875, 0.1875) -- (0.1875, .25) --
(0.1875, .25) -- (.25, .25) --
(.25, .25) -- (.25, 0.1875) -- 
(.25, 0.1875) -- (0.1875, 0.1875) -- 
   cycle;
\fill[black]
 (.75, 0) -- (.75, .0625) --
(.75, .0625) -- (.75, .0625) --
(0.8125, .0625) -- (0.8125, 0) -- 
(0.8125, 0) -- (.75, 0) -- 
   cycle;
\fill[black]
 (0.9375, 0) -- (0.9375, .0625) --
(0.9375, .0625) -- (0.9375, .0625) --
(1, .0625) -- (1, 0) -- 
(1, 0) -- (0.9375, 0) -- 
   cycle;
\fill[black]
 (.75, 0.1875) -- (.75, .25) --
(.75, .25) -- (.75, .25) --
(0.8125, .25) -- (0.8125, 0.1875) -- 
(0.8125, 0.1875) -- (.75, 0.1875) -- 
   cycle;
\fill[black]
 (0.9375, 0.1875) -- (0.9375, .25) --
(0.9375, .25) -- (0.9375, .25) --
(1, .25) -- (1, 0.1875) -- 
(1, 0.1875) -- (0.9375, 0.1875) -- 
   cycle;
 \fill[black]
 (0, .75) -- (0, 0.8125) --
(0, 0.8125) -- (.0625, 0.8125) --
(.0625, 0.8125) -- (.0625, .75) -- 
(.0625, .75) -- (0, .75) -- 
   cycle;
\fill[black]
 (0.1875, .75) -- (0.1875, 0.8125) --
(0.1875, 0.8125) -- (.25, 0.8125) --
(.25, 0.8125) -- (.25, .75) -- 
(.25, .75) -- (0.1875, .75) -- 
   cycle;
\fill[black]
 (0, 0.9375) -- (0, 1) --
(0, 1) -- (.0625, 1) --
(.0625, 1) -- (.0625, 0.9375) -- 
(.0625, 0.9375) -- (0, 0.9375) -- 
   cycle;
\fill[black]
 (0.1875, .9375) -- (0.1875, 1) --
(0.1875, 1) -- (.25, 1) --
(.25, 1) -- (.25, .9375) -- 
(.25, .9375) -- (0.1875, .9375) -- 
   cycle;
\fill[black]
 (.75, .75) -- (.75, 0.8125) --
(.75, 0.8125) -- (0.8125, 0.8125) --
(0.8125, 0.8125) -- (0.8125, .75) -- 
(0.8125, .75) -- (.75, .75) -- 
   cycle;
\fill[black]
 (0.9375, .75) -- (0.9375, 0.8125) --
(0.9375, 0.8125) -- (1, 0.8125) --
(1, 0.8125) -- (1, .75) -- 
(1, .75) -- (0.9375, .75) -- 
   cycle;
\fill[black]
 (.75, 0.9375) -- (.75, 1) --
(.75, 1) -- (0.8125, 1) --
(0.8125, 1) -- (0.8125, 0.9375) -- 
(0.8125, 0.9375) -- (.75, 0.9375) -- 
   cycle;
\fill[black]
 (0.9375, .9375) -- (0.9375, 1) --
(0.9375, 1) -- (1, 1) --
(1, 1) -- (1, .9375) -- 
(1, .9375) -- (0.9375, .9375) -- 
   cycle;
\fill[black]
 (3, 0) -- (3, .0625) --
(3, .0625) -- (3.0625, .0625) --
(3.0625, .0625) -- (3.0625, 0) -- 
(3.0625, 0) -- (3, 0) -- 
   cycle;
\fill[black]
 (3.1875, 0) -- (3.1875, .0625) --
(3.1875, .0625) -- (3.25, .0625) --
(3.25, .0625) -- (3.25, 0) -- 
(3.25, 0) -- (3.1875, 0) -- 
   cycle;
\fill[black]
 (3, 0.1875) -- (3, .25) --
(3, .25) -- (3.0625, .25) --
(3.0625, .25) -- (3.0625, 0.1875) -- 
(3.0625, 0.1875) -- (3, 0.1875) -- 
   cycle;
\fill[black]
 (3.1875, 0.1875) -- (3.1875, .25) --
(3.1875, .25) -- (3.25, .25) --
(3.25, .25) -- (3.25, 0.1875) -- 
(3.25, 0.1875) -- (3.1875, 0.1875) -- 
   cycle;
\fill[black]
 (3.75, 0) -- (3.75, .0625) --
(3.75, .0625) -- (3.75, .0625) --
(3.8125, .0625) -- (3.8125, 0) -- 
(3.8125, 0) -- (3.75, 0) -- 
   cycle;
\fill[black]
 (3.9375, 0) -- (3.9375, .0625) --
(3.9375, .0625) -- (3.9375, .0625) --
(4, .0625) -- (4, 0) -- 
(4, 0) -- (3.9375, 0) -- 
   cycle;
\fill[black]
 (3.75, 0.1875) -- (3.75, .25) --
(3.75, .25) -- (3.75, .25) --
(3.8125, .25) -- (3.8125, 0.1875) -- 
(3.8125, 0.1875) -- (3.75, 0.1875) -- 
   cycle;
\fill[black]
 (3.9375, 0.1875) -- (3.9375, .25) --
(3.9375, .25) -- (3.9375, .25) --
(4, .25) -- (4, 0.1875) -- 
(4, 0.1875) -- (3.9375, 0.1875) -- 
   cycle;
 \fill[black]
 (3, .75) -- (3, 0.8125) --
(3, 0.8125) -- (3.0625, 0.8125) --
(3.0625, 0.8125) -- (3.0625, .75) -- 
(3.0625, .75) -- (3, .75) -- 
   cycle;
\fill[black]
 (3.1875, .75) -- (3.1875, 0.8125) --
(3.1875, 0.8125) -- (3.25, 0.8125) --
(3.25, 0.8125) -- (3.25, .75) -- 
(3.25, .75) -- (3.1875, .75) -- 
   cycle;
\fill[black]
 (3, 0.9375) -- (3, 1) --
(3, 1) -- (3.0625, 1) --
(3.0625, 1) -- (3.0625, 0.9375) -- 
(3.0625, 0.9375) -- (3, 0.9375) -- 
   cycle;
\fill[black]
 (3.1875, .9375) -- (3.1875, 1) --
(3.1875, 1) -- (3.25, 1) --
(3.25, 1) -- (3.25, .9375) -- 
(3.25, .9375) -- (3.1875, .9375) -- 
   cycle;
\fill[black]
 (3.75, .75) -- (3.75, 0.8125) --
(3.75, 0.8125) -- (3.8125, 0.8125) --
(3.8125, 0.8125) -- (3.8125, .75) -- 
(3.8125, .75) -- (3.75, .75) -- 
   cycle;
\fill[black]
 (3.9375, .75) -- (3.9375, 0.8125) --
(3.9375, 0.8125) -- (4, 0.8125) --
(4, 0.8125) -- (4, .75) -- 
(4, .75) -- (3.9375, .75) -- 
   cycle;
\fill[black]
 (3.75, 0.9375) -- (3.75, 1) --
(3.75, 1) -- (3.8125, 1) --
(3.8125, 1) -- (3.8125, 0.9375) -- 
(3.8125, 0.9375) -- (3.75, 0.9375) -- 
   cycle;
\fill[black]
 (3.9375, .9375) -- (3.9375, 1) --
(3.9375, 1) -- (4, 1) --
(4, 1) -- (4, .9375) -- 
(4, .9375) -- (3.9375, .9375) -- 
   cycle;
\fill[black]
 (0, 3) -- (0, 3.0625) --
(0, 3.0625) -- (.0625, 3.0625) --
(.0625, 3.0625) -- (.0625, 3) -- 
(.0625, 3) -- (0, 3) -- 
   cycle;
\fill[black]
 (0.1875, 3) -- (0.1875, 3.0625) --
(0.1875, 3.0625) -- (.25, 3.0625) --
(.25, 3.0625) -- (.25, 3) -- 
(.25, 3) -- (0.1875, 3) -- 
   cycle;
\fill[black]
 (0, 3.1875) -- (0, 3.25) --
(0, 3.25) -- (.0625, 3.25) --
(.0625, 3.25) -- (.0625, 3.1875) -- 
(.0625, 3.1875) -- (0, 3.1875) -- 
   cycle;
\fill[black]
 (0.1875, 3.1875) -- (0.1875, 3.25) --
(0.1875, 3.25) -- (.25, 3.25) --
(.25, 3.25) -- (.25, 3.1875) -- 
(.25, 3.1875) -- (0.1875, 3.1875) -- 
   cycle;
\fill[black]
 (.75, 3) -- (.75, 3.0625) --
(.75, 3.0625) -- (.75, 3.0625) --
(0.8125, 3.0625) -- (0.8125, 3) -- 
(0.8125, 3) -- (.75, 3) -- 
   cycle;
\fill[black]
 (0.9375, 3) -- (0.9375, 3.0625) --
(0.9375, 3.0625) -- (0.9375, 3.0625) --
(1, 3.0625) -- (1, 3) -- 
(1, 3) -- (0.9375, 3) -- 
   cycle;
\fill[black]
 (.75, 3.1875) -- (.75, 3.25) --
(.75, 3.25) -- (.75, 3.25) --
(0.8125, 3.25) -- (0.8125, 3.1875) -- 
(0.8125, 3.1875) -- (.75, 3.1875) -- 
   cycle;
\fill[black]
 (0.9375, 3.1875) -- (0.9375, 3.25) --
(0.9375, 3.25) -- (0.9375, 3.25) --
(1, 3.25) -- (1, 3.1875) -- 
(1, 3.1875) -- (0.9375, 3.1875) -- 
   cycle;
 \fill[black]
 (0, 3.75) -- (0, 3.8125) --
(0, 3.8125) -- (.0625, 3.8125) --
(.0625, 3.8125) -- (.0625, 3.75) -- 
(.0625, 3.75) -- (0, 3.75) -- 
   cycle;
\fill[black]
 (0.1875, 3.75) -- (0.1875, 3.8125) --
(0.1875, 3.8125) -- (.25, 3.8125) --
(.25, 3.8125) -- (.25, 3.75) -- 
(.25, 3.75) -- (0.1875, 3.75) -- 
   cycle;
\fill[black]
 (0, 3.9375) -- (0, 4) --
(0, 4) -- (.0625, 4) --
(.0625, 4) -- (.0625, 3.9375) -- 
(.0625, 3.9375) -- (0, 3.9375) -- 
   cycle;
\fill[black]
 (0.1875, 3.9375) -- (0.1875, 4) --
(0.1875, 4) -- (.25, 4) --
(.25, 4) -- (.25, 3.9375) -- 
(.25, 3.9375) -- (0.1875, 3.9375) -- 
   cycle;
\fill[black]
 (.75, 3.75) -- (.75, 3.8125) --
(.75, 3.8125) -- (0.8125, 3.8125) --
(0.8125, 3.8125) -- (0.8125, 3.75) -- 
(0.8125, 3.75) -- (.75, 3.75) -- 
   cycle;
\fill[black]
 (0.9375, 3.75) -- (0.9375, 3.8125) --
(0.9375, 3.8125) -- (1, 3.8125) --
(1, 3.8125) -- (1, 3.75) -- 
(1, 3.75) -- (0.9375, 3.75) -- 
   cycle;
\fill[black]
 (.75, 3.9375) -- (.75, 4) --
(.75, 4) -- (0.8125, 4) --
(0.8125, 4) -- (0.8125, 3.9375) -- 
(0.8125, 3.9375) -- (.75, 3.9375) -- 
   cycle;
\fill[black]
 (0.9375, 3.9375) -- (0.9375, 4) --
(0.9375, 4) -- (1, 4) --
(1, 4) -- (1, 3.9375) -- 
(1, 3.9375) -- (0.9375, 3.9375) -- 
   cycle;
\fill[black]
 (3, 3) -- (3, 3.0625) --
(3, 3.0625) -- (3.0625, 3.0625) --
(3.0625, 3.0625) -- (3.0625, 3) -- 
(3.0625, 3) -- (3, 3) -- 
   cycle;
\fill[black]
 (3.1875, 3) -- (3.1875, 3.0625) --
(3.1875, 3.0625) -- (3.25, 3.0625) --
(3.25, 3.0625) -- (3.25, 3) -- 
(3.25, 3) -- (3.1875, 3) -- 
   cycle;
\fill[black]
 (3, 3.1875) -- (3, 3.25) --
(3, 3.25) -- (3.0625, 3.25) --
(3.0625, 3.25) -- (3.0625, 3.1875) -- 
(3.0625, 3.1875) -- (3, 3.1875) -- 
   cycle;
\fill[black]
 (3.1875, 3.1875) -- (3.1875, 3.25) --
(3.1875, 3.25) -- (3.25, 3.25) --
(3.25, 3.25) -- (3.25, 3.1875) -- 
(3.25, 3.1875) -- (3.1875, 3.1875) -- 
   cycle;
\fill[black]
 (3.75, 3) -- (3.75, 3.0625) --
(3.75, 3.0625) -- (3.75, 3.0625) --
(3.8125, 3.0625) -- (3.8125, 3) -- 
(3.8125, 3) -- (3.75, 3) -- 
   cycle;
\fill[black]
 (3.9375, 3) -- (3.9375, 3.0625) --
(3.9375, 3.0625) -- (3.9375, 3.0625) --
(4, 3.0625) -- (4, 3) -- 
(4, 3) -- (3.9375, 3) -- 
   cycle;
\fill[black]
 (3.75, 3.1875) -- (3.75, 3.25) --
(3.75, 3.25) -- (3.75, 3.25) --
(3.8125, 3.25) -- (3.8125, 3.1875) -- 
(3.8125, 3.1875) -- (3.75, 3.1875) -- 
   cycle;
\fill[black]
 (3.9375, 3.1875) -- (3.9375, 3.25) --
(3.9375, 3.25) -- (3.9375, 3.25) --
(4, 3.25) -- (4, 3.1875) -- 
(4, 3.1875) -- (3.9375, 3.1875) -- 
   cycle;
 \fill[black]
 (3, 3.75) -- (3, 3.8125) --
(3, 3.8125) -- (3.0625, 3.8125) --
(3.0625, 3.8125) -- (3.0625, 3.75) -- 
(3.0625, 3.75) -- (3, 3.75) -- 
   cycle;
\fill[black]
 (3.1875, 3.75) -- (3.1875, 3.8125) --
(3.1875, 3.8125) -- (3.25, 3.8125) --
(3.25, 3.8125) -- (3.25, 3.75) -- 
(3.25, 3.75) -- (3.1875, 3.75) -- 
   cycle;
\fill[black]
 (3, 3.9375) -- (3, 4) --
(3, 4) -- (3.0625, 4) --
(3.0625, 4) -- (3.0625, 3.9375) -- 
(3.0625, 3.9375) -- (3, 3.9375) -- 
   cycle;
\fill[black]
 (3.1875, 3.9375) -- (3.1875, 4) --
(3.1875, 4) -- (3.25, 4) --
(3.25, 4) -- (3.25, 3.9375) -- 
(3.25, 3.9375) -- (3.1875, 3.9375) -- 
   cycle;
\fill[black]
 (3.75, 3.75) -- (3.75, 3.8125) --
(3.75, 3.8125) -- (3.8125, 3.8125) --
(3.8125, 3.8125) -- (3.8125, 3.75) -- 
(3.8125, 3.75) -- (3.75, 3.75) -- 
   cycle;
\fill[black]
 (3.9375, 3.75) -- (3.9375, 3.8125) --
(3.9375, 3.8125) -- (4, 3.8125) --
(4, 3.8125) -- (4, 3.75) -- 
(4, 3.75) -- (3.9375, 3.75) -- 
   cycle;
\fill[black]
 (3.75, 3.9375) -- (3.75, 4) --
(3.75, 4) -- (3.8125, 4) --
(3.8125, 4) -- (3.8125, 3.9375) -- 
(3.8125, 3.9375) -- (3.75, 3.9375) -- 
   cycle;
\fill[black]
 (3.9375, 3.9375) -- (3.9375, 4) --
(3.9375, 4) -- (4, 4) --
(4, 4) -- (4, 3.9375) -- 
(4, 3.9375) -- (3.9375, 3.9375) -- 
   cycle;
\draw (0, 4.5) -- (0.0625, 4.5);
\draw (0.1875, 4.5) -- (0.25, 4.5);
\draw (0.75, 4.5) -- (0.8125, 4.5);
\draw (0.9375, 4.5) -- (1, 4.5);
\draw (3, 4.5) -- (3.0625, 4.5);
\draw (3.1875, 4.5) -- (3.25, 4.5);
\draw (3.75, 4.5) -- (3.8125, 4.5);
\draw (3.9375, 4.5) -- (4, 4.5);
\end{tikzpicture}
\caption{Images of $C_1$, $C_2$, and $C_3$ placed above images of $K_1$, $K_2$, and $K_3$, respectively.}
\label{strips}
\end{figure}

Now we introduce the four-corner Cantor set in the plane.
The first step is to describe the middle-half Cantor set in the real line, denoted by $C$.
For any $n \in \N \cup \set{0}$, let $C_n$ denote the $n^{\text{th}}$ generation of the set $C$.
Then $C_0 = \brac{0,1}$ and for any $n \in \N$, 
$$
C_n=\bigcup_{\substack{a_j\in\{0, 3\}\\j=1, \ldots, n}}\brac{\sum_{j=1}^na_j4^{-j}, \sum_{j=1}^na_j4^{-j}+4^{-n}}.
$$
For example, $C_1 = \brac{0, \frac 1 4} \cup \brac{\frac 3 4, 1}$, the set that is obtained by removing the middle half of $C_0$.
In fact, each $C_{n+1}$ is obtained through the self-similar process of removing the middle half of all intervals that comprise $C_n$.
We define $\disp C  = \bigcap_{n=0}^\iny C_n$, the middle-half Cantor set.
Then the four-corner Cantor set is the product set given by $K=C\times C$. 
Then the $n^{\text{th}}$ generation of $K$ is given by $K_n = C_n \times C_n$, so we may realize the four-corner Cantor set as $\disp K  = \bigcap_{n=0}^\iny K_n$.

The classical Favard length is given by $\disp \Fav\pr{E} = \int_{\mathbb{S}^1} \abs{\proj_\om E} d\om$, where $\proj_\om$ denotes the orthogonal projection onto a line that makes an angle of $\om$ with the $x$-axis, say.
As previously mentioned, it was shown by Nazarov, Peres, and Volberg \cite{NPV10} that $\Fav(K_n)$ exhibits power decay in $n$, see Theorem \ref{NPVThm}.
In \cite{BaV10}, Bateman and Volberg proved a lower bound, described by Theorem \ref{BVThm}, for the rate of decay of the Favard length of the four-corner Cantor set.
These two results are the best known bounds to date.
The point of this article is to provide versions of Theorems \ref{NPVThm} and \ref{BVThm} in which the standard Favard length is replaced by the Favard curve length, as given in Definition \ref{FavC}.

To prove our theorems, we need to impose a number of conditions on the curves that define our projections.
For the upper bound, to ensure that each curve projection is finite, we assume that the curve itself has a finite length.
As the curvature plays an important role in our analysis, this quantity needs to be meaningful.
Therefore, we impose the condition that our curve is piecewise $C^1$ with a piecewise bi-Lipschitz continuous unit tangent vector.
In particular, the unit tangent vector is defined everywhere except for a finite number of points and, by Rademacher's theorem, the curvature is defined a.e. and bounded from above and below.
It follows that the number of points of inflection (points where the signed curvature changes sign) is finite.

The first main result of this article is the following theorem:

\begin{thm}[Upper bound for Favard curve length]
\label{upperBd}
Let $\mathcal{C}$ be a curve of finite length that is piecewise $C^1$ and has a piecewise bi-Lipschitz continuous unit tangent vector.
For every $p>  6$, there exists $C>0$ depending on $\mathcal{C}$ such that for all $n \in \N$,
$$\FavC(K_n) \le C n^{-1/p}.$$
\end{thm}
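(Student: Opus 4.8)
The plan is to reduce the estimate to the classical Nazarov--Peres--Volberg bound (Theorem \ref{NPVThm}) applied at a suitable intermediate scale, exploiting the bounded curvature of $\mathcal{C}$ to linearize each curve projection locally, and the bi-Lipschitz hypothesis on the unit tangent to convert the integral $\int_\R d\al$ into an integral over directions $\om \in \mathbb{S}^1$.

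First I would reduce to a single well-behaved arc. Since $\FavC(K_n) = \abs{K_n + (-\mathcal{C})}$ by Remark \ref{FavCLengthRem}, and $K_n - \mathcal{C} = \bigcup_j (K_n - \mathcal{C}_j)$ for any decomposition $\mathcal{C} = \bigcup_j \mathcal{C}_j$, subadditivity of Lebesgue measure gives $\FavC(K_n) \le \sum_j \abs{K_n - \mathcal{C}_j}$. Because $\mathcal{C}$ is piecewise $C^1$ with a piecewise bi-Lipschitz unit tangent and only finitely many inflection points, I can split $\mathcal{C}$ into a constant number of arcs on each of which, after possibly exchanging the roles of $\Phi_\al$ and $\Psi_\be$ (Remark \ref{FavCLengthRem}), the arc is a graph $x = g(y)$ whose tangent angle $\om$ is a monotone, bi-Lipschitz function of arc length with curvature bounded above and below. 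It therefore suffices to bound $\abs{K_n - \mathcal{C}_j}$ for one such arc, whose associated projection I still denote $\Phi_\al$.

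Fix an intermediate scale $m = \lceil 2n/3 \rceil$ and decompose $K_n$ into the $4^m$ generation-$m$ squares $Q$, inside each of which $K_n \cap Q$ is a copy of $K_{n-m}$ rescaled by $4^{-m}$. For fixed $\al$ and $Q$, the relevant portion of the curve is an arc of length $\lesssim 4^{-m}$; writing it as $x=g(y)$ gives $\Phi_\al(p) = p_2 - g^{-1}(p_1 - \al)$, and a Taylor expansion shows that over $Q$ this map agrees with an affine map $L_{\al,Q}$ --- a linear projection in the direction $\om(\al,Q)$ tangent to the curve --- up to an error of size $\lesssim 4^{-2m}$. Hence $\Phi_\al(K_n \cap Q)$ lies in the $C4^{-2m}$-neighborhood of $L_{\al,Q}(K_n \cap Q)$, so that
\[
\abs{\Phi_\al(K_n \cap Q)} \le \abs{L_{\al,Q}(K_n \cap Q)} + C\, 4^{-2m}\, N_{\al,Q},
\]
where $N_{\al,Q}$ is the number of components of $\proj_{\om(\al,Q)}(K_{n-m})$ and $\abs{L_{\al,Q}(K_n\cap Q)} \lesssim 4^{-m}\abs{\proj_{\om(\al,Q)}(K_{n-m})}$. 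Summing over $Q$, integrating in $\al$, and using that for each $Q$ the correspondence $\al \mapsto \om(\al,Q)$ is bi-Lipschitz onto a fixed sub-arc of directions (so $\abs{d\al/d\om}$ is bounded), the main terms combine to
\[
\sum_Q \int_\R \abs{L_{\al,Q}(K_n\cap Q)}\, d\al \lesssim \sum_Q 4^{-m} \int_{\mathbb{S}^1} \abs{\proj_\om(K_{n-m})}\, d\om = \Fav(K_{n-m}) \lesssim (n-m)^{-1/p},
\]
by Theorem \ref{NPVThm}, which is $\lesssim n^{-1/p}$ since $n-m = \Theta(n)$. For the error terms, the crude bound $N_{\al,Q} \le 4^{n-m}$ and the $O(1)$ length of the $\al$-support of each $Q$ give $\sum_Q \int_\R 4^{-2m} N_{\al,Q}\, d\al \lesssim 4^m \cdot 4^{-2m} \cdot 4^{n-m} = 4^{\,n-2m} = 4^{-n/3}$, which is negligible. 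Summing over the finitely many arcs yields $\FavC(K_n) \lesssim n^{-1/p}$.

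The main obstacle is the tension in the choice of $m$: the linearization needs $m$ large enough that the curvature error $4^{-2m}$ is dominated, while the Nazarov--Peres--Volberg gain needs $n-m$ to remain a fixed proportion of $n$; any ratio $1/2 < m/n < 1$ reconciles these. The delicate points are (i) verifying that the uniform curvature and bi-Lipschitz bounds survive the decomposition into arcs, including near inflection points and points where the tangent is axis-parallel, which are handled by cutting $\mathcal{C}$ there and alternating between $\Phi_\al$ and $\Psi_\be$; and (ii) establishing two-sided derivative bounds for $\al \mapsto \om$. The latter is exactly where the bi-Lipschitz (rather than merely Lipschitz) hypothesis on the unit tangent is essential: it prevents the effective projection direction from lingering, and thereby lets the one-dimensional $\al$-integral inherit the full Favard-length decay over directions.
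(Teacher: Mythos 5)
Your proposal is correct, and its overall architecture matches the paper's: decompose $K_n$ into sub-squares at an intermediate scale inside which $K_n$ is a rescaled copy of an earlier generation, compare the curve projection on each piece to an orthogonal projection in the tangent direction, use the two-sided curvature bound $\la^{-1}\le\abs{\vp''}\le\la$ to get $\abs{d\al/d\om}\lesssim\la$ and convert $\int d\al$ into $\int_{\mathbb{S}^1}d\om$, and then invoke Theorem \ref{NPVThm}. Where you genuinely diverge is in the implementation of the comparison step, and the difference is instructive. The paper's Lemma \ref{projRelLemm} proves a \emph{two-sided} comparability $\abs{\Phi_\al\pr{\widetilde Q_j}}\simeq 2^{-n}\abs{\proj_{\te_{z_0}\pr{\al}}\pr{K_{n/2}}}$ exactly at resolution $4^{-n}$, via a Vitali covering of $\Phi_\al\pr{\widetilde Q_j}$ by $\de$-intervals and an essentially disjoint family of transversal strips; there is no additive error, which is what permits the natural split at scale $m=n/2$. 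Your Taylor linearization with the one-sided neighborhood bound $\abs{\Phi_\al\pr{K_n\cap Q}}\le\abs{L_{\al,Q}\pr{K_n\cap Q}}+C\,4^{-2m}N_{\al,Q}$ and the crude component count $N_{\al,Q}\le 4^{n-m}$ is more elementary (no covering argument), but the resulting error $4^{n-2m}$ forces $m$ strictly above $n/2$ --- at the paper's scale $m=n/2$ your error term is $O(1)$ and the argument collapses, which is exactly why your choice $m=\lceil 2n/3\rceil$ (or any ratio in $(1/2,1)$) is needed; since $n-m=\Theta(n)$ either way, the final exponent is unaffected. (The paper itself notes in Remark \ref{two-sided} that only the upper half of its lemma is used, so your one-sided bound suffices in principle.) Two small points to tidy: the expression $\Phi_\al\pr{p}=p_2-g^{-1}\pr{p_1-\al}$ conflates the graph-over-$x$ and graph-over-$y$ cases --- after the reduction of Remark \ref{FavCLengthRem} you should simply work with $\mathcal{C}=\set{\pr{t,\vp\pr{t}}:t\in I}$, $\abs{\vp'}\le 1$, as in Proposition \ref{wQjProp}; and for values of $\al$ near the endpoints of the support of a square $Q$ the reference tangent $\vp'\pr{t_Q}$ may fall outside $I$, which the paper handles with a short parabolic extension $\hat\vp$ on a slightly enlarged interval $\hat I$ --- the same fix works verbatim in your argument.
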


The second main result of this article is the following theorem:

\begin{thm}[Lower bound for Favard curve length]
\label{lowerBd}
Let $\mathcal{C}$ be a curve that is piecewise $C^1$ and has a piecewise bi-Lipschitz continuous unit tangent vector.
There exists $C>0$ depending on $\mathcal{C}$ such that for all $n \in \N$,
$$\FavC(K_n) \ge C n^{-1}.$$
\end{thm}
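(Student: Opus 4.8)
The plan is to run the classical second-moment (energy) lower bound for Favard length, adapted to the curve setting, with convolution against arclength measure on $\mathcal{C}$ playing the role that integration over the circle of directions plays in the linear case. The first move is to reduce to a short, genuinely curved sub-arc. Since $\mathcal{C}$ has a piecewise bi-Lipschitz unit tangent $T$, on each smooth piece the curvature $\kappa$ is bounded above and below, $0<c\le|\kappa|\le C$; here the lower bound is exactly the lower bi-Lipschitz estimate for $T$ (it forces $\mathcal{C}$ to be nowhere straight), and away from the finitely many inflection and non-smooth points $\kappa$ keeps a constant sign. Fix a sub-arc $\mathcal{C}'\subseteq\mathcal{C}$ short enough that its total turning is $<\pi$; then $\mathcal{C}'$ is embedded, is bi-Lipschitz as a map from its arclength parameter $t$ into $\R^2$ (so $|z(t)-z(t')|\simeq|t-t'|$), and has injective tangent direction. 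By Remark \ref{FavCLengthRem}, $\FavC(K_n)=|K_n-\mathcal{C}|\ge|K_n-\mathcal{C}'|=\Fav_{\mathcal{C}'}(K_n)$, so it suffices to bound $\Fav_{\mathcal{C}'}(K_n)$ from below.

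Next, introduce the natural probability measure $\mu_n$ on $K_n$ (density $4^{n}\mathbf 1_{K_n}$) and let $\sigma$ be arclength on $-\mathcal{C}'$, of total mass $L'=\mathrm{length}(\mathcal{C}')$. Then $g_n:=\mu_n*\sigma$ is an $L^1$ density supported on $K_n-\mathcal{C}'$ with $\int g_n=L'$, so Cauchy-Schwarz gives
\[
\Fav_{\mathcal{C}'}(K_n)=|K_n-\mathcal{C}'|\ \ge\ \frac{\left(\int g_n\right)^2}{\|g_n\|_2^2}=\frac{(L')^2}{\|g_n\|_2^2}.
\]
It remains to prove $\|g_n\|_2^2\lesssim n$. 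Expanding the convolution yields
\[
\|g_n\|_2^2=\int h_n(w)\,d\Theta(w),\qquad h_n(w):=4^{2n}\,\abs{K_n\cap(K_n+w)},\quad \Theta:=\sigma*\tilde\sigma,
\]
where $\tilde\sigma$ is the reflection of $\sigma$ and $\Theta$ is the autocorrelation of arclength on the arc.

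The crux is a density bound on $\Theta$, and this is the step where the hypotheses—especially the lower curvature bound—are essential. I claim $\Theta$ is absolutely continuous with density $\rho_\Theta(w)\le C\abs{w}^{-1}$. Writing the difference map $F(u,u')=z(u)-z(u')$ and changing to the variables $u,\ v=u-u'$, one computes $\abs{\det DF}\simeq \abs{v}\,\abs{\kappa}\gtrsim\abs{v}\simeq\abs{w}$, while injectivity of the tangent direction (total turning $<\pi$) makes $F$ finite-to-one; summing $1/\abs{\det DF}$ over the $O(1)$ preimages gives $\rho_\Theta(w)\lesssim\abs{w}^{-1}$ (for a circular arc of curvature $\kappa$ this is sharp, $\rho_\Theta\simeq(\kappa\abs{w})^{-1}$). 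I expect this curvature estimate to be the main obstacle: for a straight arc $\Theta$ would be singular and the argument would collapse, so the lower bound on $\kappa$ is genuinely used and it is exactly what the bi-Lipschitz tangent hypothesis supplies.

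Granting the density bound, the pointwise estimate $\rho_\Theta(w)\le C\abs{w}^{-1}$ converts the curve energy into the classical Riesz $1$-energy of the Cantor measure:
\[
\|g_n\|_2^2=\int h_n\,\rho_\Theta\,dw\ \le\ C\int \frac{h_n(w)}{\abs{w}}\,dw = C\iint \frac{d\mu_n(x)\,d\mu_n(y)}{\abs{x-y}}=C\,I_1(\mu_n).
\]
Finally I would estimate $I_1(\mu_n)\lesssim n$ in the standard way: the natural measure obeys the Frostman bound $\mu_n(B(x,r))\lesssim r$ for every $r$ (for $r\ge 4^{-n}$ by the $1$-dimensional regularity of $K_n$, and for $r<4^{-n}$ from the density $4^n$ on a single square), so a dyadic decomposition over the scales $4^{-k}$, $k=0,\dots,n$, gives $\int\abs{x-y}^{-1}\,d\mu_n(y)\lesssim\sum_{k=0}^{n}4^{k}\cdot 4^{-k}\simeq n$ and hence $I_1(\mu_n)\lesssim n$. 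Combining the displays yields $\|g_n\|_2^2\lesssim n$, and therefore $\FavC(K_n)\ge\Fav_{\mathcal{C}'}(K_n)\gtrsim (L')^2/n\gtrsim n^{-1}$, with the implied constant depending only on $\mathcal{C}$.
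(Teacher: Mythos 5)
Your argument is correct, but it takes a genuinely different route from the paper's, even though both rest on the same Cauchy--Schwarz second-moment skeleton. The paper works with the integer-valued counting function $f_n(z)=\#\set{Q\su K_n : Q\cap\pr{z+\mathcal{C}}\ne\emptyset}$, shows $\int f_n\simeq 1$, and then bounds $\int f_n^2=\sum_{i,j}p_{i,j}$ with $p_{i,j}=\abs{\pr{Q_i-\mathcal{C}}\cap\pr{Q_j-\mathcal{C}}}$ \emph{combinatorially}: cube pairs are classified as $\pr{k,\ell}$-pairs, the overlap bound $p_{i,j}\lesssim 4^{k-2n}$ is proved by stepping a good parameter pair forward in increments of $4^{-n}$ and invoking the mean value theorem together with the bi-Lipschitz lower bound on $\vp'$ (this is where the curvature floor enters), and an exact count of $\pr{k,\ell}$-pairs in $C_n$ and $K_n$ is carried out by induction; summing the three regimes gives $\int f_n^2\lesssim n$. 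You instead smooth the counting: $g_n=\mu_n*\sigma$, the identity $\norm{g_n}_2^2=\int h_n\,d\Theta$ with $\Theta=\sigma*\tilde\sigma$, the autocorrelation density bound $\rho_\Theta(w)\lesssim\abs{w}^{-1}$, and then the classical Riesz $1$-energy estimate $I_1(\mu_n)\lesssim n$ via the Frostman bound $\mu_n\pr{B(x,r)}\lesssim r$. This outsources all the Cantor-set combinatorics (the paper's pair-counting lemma and corollary) to a standard energy computation and isolates the geometry of the curve in one Jacobian estimate; it is essentially the energy-technique proof that the paper attributes to the forthcoming work of Bongers and Taylor \cite{BT21}. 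Both proofs use the curvature lower bound at the morally identical spot: your $\abs{\det DF}\gtrsim\abs{w}$ plays the role of the paper's claim that the set of good parameters has measure $\simeq 4^{k-n}$. Two details worth tightening. First, your Jacobian is $\det DF=-\sin\pr{\te(u')-\te(u)}$, which involves only the unit tangent, so the merely a.e.-defined curvature causes no regularity problem in the change of variables --- say this explicitly rather than writing $\abs{\det DF}\simeq\abs{v}\abs{\kappa}$, which suggests second derivatives are needed. Second, the ``finite-to-one with $O(1)$ preimages'' claim deserves a proof: with only ``total turning $<\pi$'' it is fiddly, but after the paper's Section \ref{Curves} normalization ($\mathcal{C}$ a graph $\set{\pr{t,\vp(t))}: t \in I}$ with $\vp'$ strictly monotone) the map $F(u,u')=z(u')-z(u)$ is injective on $\set{u<u'}$, since for fixed $h>0$ the function $t\mapsto\vp(t+h)-\vp(t)$ is strictly monotone; so the multiplicity is exactly $2$ (the swap), and your density bound follows cleanly.
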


If a sharper version of Theorem \ref{NPVThm} became available, our Theorem \ref{upperBd} would automatically inherit this improvement. 
On the other hand, the techniques used to prove Theorem \ref{lowerBd} are more direct, so an improvement to the lower bound for the classical Favard length described by Theorem \ref{BVThm} may or may not affect our lower bound.
Compared to the results of Theorem \ref{BVThm}, our lower bound is weaker.
We are currently investigating whether a $\log n$ improvement may be made to Theorem \ref{lowerBd} as in \cite{BaV10}.  
In a forthcoming paper of Bongers and Taylor \cite{BT21}, an alternate proof of Theorem \ref{lowerBd} is proved using energy techniques.  

To explain why we rule out curves with regions of arbitrarily small curvature, we consider the line segment $\ell = \set{\pr{2t, t} : t \in \pr{-1, 1}}$ and show that $\Fav_{\ell}\pr{K_n}$ does not decay to $0$.
Given any $z \in \brac{0,1}^2$, $z + \ell$ is a line passing through $\brac{0,1}^2$ with slope $\frac 1 2$, see Figure \ref{projPic}.
Since $\proj_{\arctan\pr{1/2}}\pr{K_1}$ fills an interval of length $3/2$, then the line $z + \ell$ intersects $K_1$, so there exists some square $Q_1$ of sidelength $1/4$ that also intersects $z + \ell$.
That is, $z + \ell$ passes through $Q_1$, which is a shifted and rescaled copy of $\brac{0,1}^2$.
By the same reasoning as before, $z + \ell$ also intersects $K_2$ at some some square $Q_2$ of sidelength $1/16$.
Repeating these arguments, we see that $z + \ell$ must also intersect $K_n$.
As this holds for an arbitrary $z \in \brac{0,1}^2$, then we conclude that $\FavC\pr{K_n} \ge 1$, which clearly does not exhibit any decay.

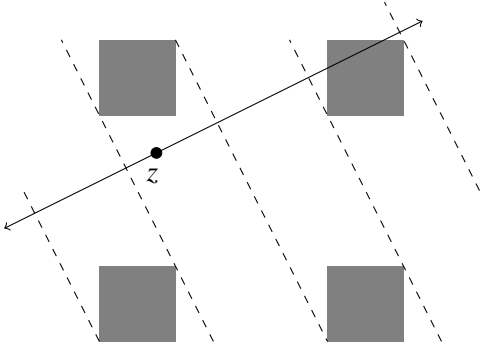
\begin{wrapfigure}{l}{0.55 \textwidth}
\begin{tikzpicture}
\fill[black!50]
 (0, 0) -- (0, 1) --
(0, 1) -- (1, 1) --
(1, 1) -- (1, 0) -- 
(1, 0) -- (0, 0) -- 
   cycle;
\fill[black!50]
 (3, 0) -- (3, 1) --
(3, 1) -- (3, 1) --
(4, 1) -- (4, 0) -- 
(4, 0) -- (3, 0) -- 
   cycle;
\fill[black!50]
 (0, 3) -- (0, 4) --
(0, 4) -- (1, 4) --
(1, 4) -- (1, 3) -- 
(1, 3) -- (0, 3) -- 
   cycle;
\fill[black!50]
 (3, 3) -- (3, 4) --
(3, 4) -- (4, 4) --
(4, 4) -- (4, 3) -- 
(4, 3) -- (3, 3) -- 
   cycle;
\draw [fill=black] (0.75, 2.5) circle (2pt); 
\draw[color=black] (0.7, 2.2) node {$z$};
\draw[->] (0.75, 2.5) -- (4.25, 4.25);
\draw[->] (0.75, 2.5) -- (-1.25, 1.5);
\draw[dashed] (0, 0) -- (-1, 2);
\draw[dashed] (1.5, 0) -- (-0.5, 4);
\draw[dashed] (1, 4) -- (3, 0);
\draw[dashed] (4.5, 0) -- (2.5, 4);
\draw[dashed] (5, 2) -- (3.75, 4.5);
\end{tikzpicture}
\centering
\caption{The image of $z + \ell$ over $K_1$.  
The dotted lines indicate the projection of $K_1$ onto the angle $\arctan\pr{1/2}$.
This shows that even as $z$ moves around $\brac{0,1}^2$, $z + \ell$ must intersect one square from $K_1$.}
\label{projPic}
\end{wrapfigure}

In \cite{DT21}, we prove a quantitative Besicovitch generalized projection theorem using multi-scale analysis. 
As an application, we give an estimate for the rate of decay of the Favard curve length of the four-corner Cantor set.
That is, we show that if $n$ is sufficiently large, then $\FavC\pr{K_n} \lesssim \pr{\log_* n}^{-1/100},$
where $\log_*$ denotes the inverse tower function.
The first result of this paper, Theorem \ref{upperBd}, gives a vast improvement over that estimate from \cite{DT21}.

The article is organized as follows.
In the next section, Section \ref{Curves}, we examine our class of curves.
We make a number of simplifying reductions to streamline our proofs, then we collect some examples of curves that fit into the framework.
Section \ref{UBS} is concerned with the proof of Theorem \ref{upperBd}, while Section \ref{LBS} presents the proof of Theorem \ref{lowerBd}.

\subsection*{Acknowledgements.}
This material is based upon work supported by the National Security Agency under Grant No. H98230-19-1-0119, The Lyda Hill Foundation, The McGovern Foundation, and Microsoft Research, while the authors were in residence at the Mathematical Sciences Research Institute in Berkeley, California, during the summer of 2019.
We would like to express our gratitude to Terrence Tao for reading our manuscript and providing us with helpful feedback.
We would also like to thank the referees for carefully reading our manuscript and offering constructive comments.

\section{The curves}
\label{Curves}

\subsection{Simplifications}

Before proceeding to our proofs, we first make some simplifying assumptions about the class of curves that we consider.

Since $\mathcal{C}$ is assumed to be a curve that is piecewise $C^1$ and has a piecewise bi-Lipschitz continuous unit tangent vector, then we may write the curve $\mathcal{C}$ as a disjoint union of continuous subcurves, $\disp \mathcal{C} = \bigsqcup_{i = 1}^{N} \mathcal{C}_i$, where each $\mathcal{C}_i$ is $C^1$ with a bi-Lipschitz continuous unit tangent vector.
In particular, it can be assumed (by further decomposing the subcurves if necessary) that the unit tangent vectors are strictly monotonic on each $\mathcal{C}_i$.
Since $\mathcal{C}$ is assumed to be of finite length in the upper bound setting, and there is no loss in further assuming that $\mathcal{C}$ is of finite length for the lower bound as well, then each $\mathcal{C}_i$ is of finite length.

We subdivide the unit semi-circle $\brac{0, \pi}$ into 2 parts as follows:
Let $\mathbb{S}^1_x = \brac{0, \pi/4} \cup \brac{3\pi/4, \pi}$ and $\mathbb{S}^1_y = \brac{\pi/4, 3\pi/4}$.
By further decomposing $\mathcal{C}$ if necessary, we may assume the unit tangent vectors of each $\mathcal{C}_i$ are entirely contained in either $\mathbb{S}^1_x$ or $\mathbb{S}^1_y$.

If the unit tangent vectors of $\mathcal{C}_i$ are entirely contained in $\mathbb{S}^1_x$, then we may write $\mathcal{C}_i$ as a graph over $x$.
That is, $\mathcal{C}_i = \set{\pr{t, \vp_i\pr{t}} : t \in I_i}$, where $I_i$ is a finite interval, $\vp_i$ is $C^1$, $\abs{\vp_i'\pr{s}} \le 1$ for all $s \in I_i$, and $\vp_i'$ is $\la_i$ bi-Lipschitz so that for all $s, t \in I_i$,
$$\la_i^{-1}\abs{s-t} \le \abs{\vp_i'\pr{s} - \vp_i'\pr{t}} \le \la_i \abs{s-t}.$$
In particular, $\vp_i'$ is strictly monotonic.
Alternatively, if the unit tangent vectors of $\mathcal{C}_i$ are entirely contained in $\mathbb{S}^1_y$, then we may write $\mathcal{C}_i = \set{\pr{\vp_i\pr{t}, t} : t \in I_i}$, a graph over $y$, with $\vp_i$ and $I_i$ as above.
Since rotating the curve by integer multiples of $\pi/2$ has the same effect as rotating the four-corner Cantor set in the opposite direction by the same multiple of $\pi/2$, such a change does not impact the Favard curve length of $K_n$.
Therefore, there is no loss of generality in assuming that the unit tangent vectors of $\mathcal{C}_1$ are entirely contained in $\mathbb{S}^1_x$.

By subadditivity of the Favard curve length, $\disp \Fav_{\mathcal{C}_1}\pr{E} \le \FavC\pr{E} \le \sum_{i=1}^N \Fav_{\mathcal{C}_i}\pr{E}$.
It follows that, for both the upper and lower bounds that we seek, there is no loss of generality in assuming that $N = 1$.

From now on, we assume that $\mathcal{C} = \set{\pr{t, \vp\pr{t}} : t \in I}$, where $I$ is a finite interval, $\vp$ is $C^1$, $\abs{\vp'} \le 1$ in $I$, $\vp'$ is $\la$ bi-Lipschitz, and $\pr{\vp'}^{-1}$ exists.
In fact, since $\vp'$ is bi-Lipschitz continuous, then $\vp''$ exists a.e., so that $\la \ge \abs{\vp''} \ge \la^{-1} > 0$ a.e. in $I$.

\subsection{Examples}

To conclude this section, we consider some examples of curves that fit into our scheme.

The curve that inspired this work is a circle of radius $R$.
This curve satisfies our hypothesis since it has a finite length equal to $2\pi R$, and is smooth with a smoothly varying tangent vector.
Moreover, the curvature is constant so there are no points of inflection.
While $\mathcal{C} = \set{\pr{R \cos \te, R \sin \te} : \te \in \mathbb{S}^1}$, we may also write $\disp \mathcal{C} = \bigsqcup_{i=1}^4 \mathcal{C}_i$, where
$\mathcal{C}_{2 \pm 1} = \set{\pr{t, \pm R \sqrt{1 - \pr{\frac t R}^2}} : t \in \brac{- \frac R {\sqrt 2}, \frac R {\sqrt 2}}}$ and 
$\mathcal{C}_{3 \pm 1} = \set{\pr{\pm R \sqrt{1 - \pr{\frac t R}^2}, t} : t \in \brac{- \frac R {\sqrt 2}, \frac R {\sqrt 2}}}$.
The functions of interest are $\vp_\pm : \brac{- \frac R {\sqrt 2}, \frac R {\sqrt 2}} \to \R$ defined by $\vp_\pm\pr{t} = \pm \sqrt{R^2 - t^2}$.
Observe that $\vp_\pm$ is smooth over its domain with $\abs{\vp_\pm'} \le 1$ and $\abs{\vp_\pm''} \ge \sqrt 8 /R$.
Therefore, the constants in the estimates for $\FavC\pr{K_n}$ depend only on $R$.

Any ellipse also fits into our scheme.
For some $a, b > 0$, we can write $\disp \mathcal{C} = \bigsqcup_{i=1}^4 \mathcal{C}_i$, where
$\mathcal{C}_{2 \pm 1} = \set{\pr{t, \pm b \sqrt{1 -\pr{\frac t a}^2}} : t \in \brac{- \frac a {\sqrt 2}, \frac a {\sqrt 2}}}$ and 
$\mathcal{C}_{3 \pm 1} = \set{\pr{\pm a \sqrt{1 -\pr{\frac t b }^2}, t} : t \in \brac{- \frac b {\sqrt 2}, \frac b {\sqrt 2}}}$.
It is clear that this curve has a finite length, is appropriately smooth, has no points of inflection, and has a well-defined curvature that is bounded above and below.
By analogy with the previous example, the constants in the estimates for $\FavC\pr{K_n}$ depend only on $a$ and $b$.

We could also consider a logarithmic spiral away from the origin.
That is, suppose that for some $R, k > 0$ and $m > 1$, $\mathcal{C} = \set{\pr{R e^{k \te} \cos \te, R e^{k \te} \sin \te} : \te \in \brac{2\pi, 2m \pi }}$.
This curve is smooth with a finite length and a well-defined curvature that is bounded above and below.
Thus, the estimates for $\FavC\pr{K_n}$ will depend on $n$, $m$ and $R$.

As we discussed after the statement of our theorem, line segments do not always work because there is a special slope at which the Favard lengths associated to such lines do not exhibit any decay.
However, there are many other polynomials that we can work with.
For example, a finite piece of a parabola satisfies the conditions of our theorem.  
Any other higher order polynomial is also suitable, as long as we avoid the points of inflection since those are places at which the curvature smoothly changes sign, and therefore does not have a bi-Lipschitz derivative.

Although it does not satisfy the conditions of our theorem, consider the vertical line segment given by $\ell = \set{\pr{0, t} : t \in \brac{0,1}}$.
If $z \in K_n$, then for any $\be \in \brac{-1, 1}$, $\pr{z_1, \be} + \ell \in K_n$.
It follows that $\Fav_\ell\pr{K_n} \ge 2^{1-n}$, which is a vast improvement over the power decay that we prove in our theorem, even though this curve does not satisfy our hypotheses.

For the curious reader, we mention a curve that does not fit into our scheme is a cycloid.
At the cusps of a cycloid, the curvature blows up, which would affect many of the arguments in the proofs presented below.
However, we could consider the part of the cycloid away from the cusps.

\section{The upper bound}
\label{UBS}

In this section, we prove the upper bound described by Theorem \ref{upperBd}.
The big idea behind the proof is that, locally, we can relate each of the curve projections to an orthogonal projection.
More specifically, we show that for a small enough piece $E$ of $K_n$, given $\al$ in a suitable domain, there exists $\te \in \mathbb{S}^1$ so that the measure of $\Phi_\al\pr{E}$ is comparable to that of $\proj_\te\pr{E}$.
We obtain an explicit relationship between $\te$ and $\al$ and use that $\mathcal{C}$ has uniformly non-vanishing curvature to show that the rate of change of $\al$ with respect to $\te$ is bounded.
This bound allows us to compare an integral of curve projections to that of standard projections, thereby producing a relationship between the Favard curve length of $E$ and the Favard length of $E$. 
By combining these bounds with the result of Nazarov, Peres and Volberg described in Theorem \ref{NPVThm} \cite{NPV10}, we then prove an upper bound for $\FavC\pr{K_n}$.

\begin{wrapfigure}{R}{0.4\textwidth}
\centering{ 
\begin{tikzpicture}
\fill[black, opacity = 0.3]
 (0, 0) -- (0, 1) --
(0, 1) -- (1, 1) --
(1, 1) -- (1, 0) -- 
(1, 0) -- (0, 0) -- 
   cycle;
\draw[color=black] (0.5, 0.5) node {$\widetilde Q_1$};
\fill[black, opacity = 0.3]
 (3, 0) -- (3, 1) --
(3, 1) -- (3, 1) --
(4, 1) -- (4, 0) -- 
(4, 0) -- (3, 0) -- 
   cycle;
\draw[color=black] (3.5, 0.5) node {$\widetilde Q_2$};
\fill[black, opacity = 0.3]
 (0, 3) -- (0, 4) --
(0, 4) -- (1, 4) --
(1, 4) -- (1, 3) -- 
(1, 3) -- (0, 3) -- 
   cycle;
\draw[color=black] (0.5, 3.5) node {$\widetilde Q_3$};
\fill[black, opacity = 0.3]
 (3, 3) -- (3, 4) --
(3, 4) -- (4, 4) --
(4, 4) -- (4, 3) -- 
(4, 3) -- (3, 3) -- 
   cycle;
\draw[color=black] (3.5, 3.5) node {$\widetilde Q_4$};
\fill[black]
 (0, 0) -- (0, .25) --
(0, .25) -- (.25, .25) --
(.25, .25) -- (.25, 0) -- 
(.25, 0) -- (0, 0) -- 
   cycle;
\fill[black]
 (.75, 0) -- (.75, .25) --
(.75, .25) -- (.75, .25) --
(1, .25) -- (1, 0) -- 
(1, 0) -- (.75, 0) -- 
   cycle;
\fill[black]
 (0, .75) -- (0, 1) --
(0, 1) -- (.25, 1) --
(.25, 1) -- (.25, .75) -- 
(.25, .75) -- (0, .75) -- 
   cycle;
\fill[black]
 (.75, .75) -- (.75, 1) --
(.75, 1) -- (1, 1) --
(1, 1) -- (1, .75) -- 
(1, .75) -- (.75, .75) -- 
   cycle;
\fill[black]
 (3, 0) -- (3, .25) --
(3, .25) -- (3.25, .25) --
(3.25, .25) -- (3.25, 0) -- 
(3.25, 0) -- (3, 0) -- 
   cycle;
\fill[black]
 (3.75, 0) -- (3.75, .25) --
(3.75, .25) -- (3.75, .25) --
(4, .25) -- (4, 0) -- 
(4, 0) -- (3.75, 0) -- 
   cycle;
\fill[black]
 (3, .75) -- (3, 1) --
(3, 1) -- (3.25, 1) --
(3.25, 1) -- (3.25, .75) -- 
(3.25, .75) -- (3, .75) -- 
   cycle;
\fill[black]
 (3.75, .75) -- (3.75, 1) --
(3.75, 1) -- (4, 1) --
(4, 1) -- (4, .75) -- 
(4, .75) -- (3.75, .75) -- 
   cycle;
\fill[black]
 (0, 3) -- (0, 3.25) --
(0, 3.25) -- (.25, 3.25) --
(.25, 3.25) -- (.25, 3) -- 
(.25, 3) -- (0, 3) -- 
   cycle;
\fill[black]
 (.75, 3) -- (.75, 3.25) --
(.75, 3.25) -- (.75, 3.25) --
(1, 3.25) -- (1, 3) -- 
(1, 3) -- (.75, 3) -- 
   cycle;
\fill[black]
 (0, 3.75) -- (0, 4) --
(0, 4) -- (.25, 4) --
(.25, 4) -- (.25, 3.75) -- 
(.25, 3.75) -- (0, 3.75) -- 
   cycle;
\fill[black]
 (.75, 3.75) -- (.75, 4) --
(.75, 4) -- (1, 4) --
(1, 4) -- (1, 3.75) -- 
(1, 3.75) -- (.75, 3.75) -- 
   cycle;
\fill[black]
 (3, 3) -- (3, 3.25) --
(3, 3.25) -- (3.25, 3.25) --
(3.25, 3.25) -- (3.25, 3) -- 
(3.25, 3) -- (3, 3) -- 
   cycle;
\fill[black]
 (3.75, 3) -- (3.75, 3.25) --
(3.75, 3.25) -- (3.75, 3.25) --
(4, 3.25) -- (4, 3) -- 
(4, 3) -- (3.75, 3) -- 
   cycle;
\fill[black]
 (3, 3.75) -- (3, 4) --
(3, 4) -- (3.25, 4) --
(3.25, 4) -- (3.25, 3.75) -- 
(3.25, 3.75) -- (3, 3.75) -- 
   cycle;
\fill[black]
 (3.75, 3.75) -- (3.75, 4) --
(3.75, 4) -- (4, 4) --
(4, 4) -- (4, 3.75) -- 
(4, 3.75) -- (3.75, 3.75) -- 
   cycle;
\end{tikzpicture} }
\vspace{2pt}
\caption{Our decomposition of $K_2$ using $K_1$.}
\label{decompPic}
\vspace{-25pt}
\end{wrapfigure}
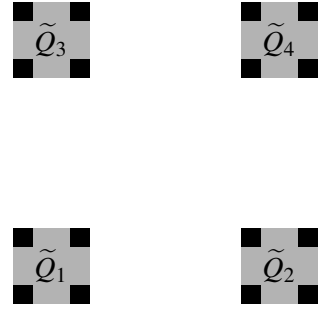

The first step is to decompose $K_n$.
Given the monotonicity of $\FavC\pr{K_n}$, there is no loss is assuming that $n$ is an even number.
Then we rewrite $K_n$ as a collection of rescaled copies of $K_{n/2}$.
To simplify notation, let $\de = 4^{-n}$ so that $\sqrt \de = 4^{-n/2} = 2^{-n}$.
Then
\begin{equation}
\label{Kn2Defn}
K_{n/2} = \bigsqcup_{j=1}^{2^{n}} Q_j, \qquad\qquad\qquad\qquad\qquad\qquad\qquad\qquad
\end{equation}
where $\set{Q_j}_{j=1}^{2^n}$ is a disjoint collection of cubes of sidelength $\sqrt \de$.
For each $j$, define $\widetilde Q_j = K_n \cap Q_j$ so that
\begin{equation*}
K_n = \bigsqcup_{j=1}^{2^{n}} \widetilde Q_j = \bigsqcup_{j=1}^{2^{n}} \pr{K_n \cap Q_j}. \qquad\qquad\qquad\qquad\qquad\qquad\qquad\qquad
\end{equation*}

\begin{example} 
Let $n = 2$ so that $\de = \frac 1 {16}$ and $\sqrt \de = \frac 1 4$.
Then $\disp K_{n/2} = K_1 = \bigsqcup_{j=1}^{4} Q_j$, where each $Q_j$ is a square of sidelength $\frac 1 4$.
Each $\widetilde Q_j = K_2 \cap Q_j$ contains $4$ squares of length $\frac 1 {16}$, so it looks like a scaled, shifted version of $K_1$.
See Figure \ref{decompPic}.
\end{example}

Since each $\widetilde Q_j$ is made up of $2^n$ squares of sidelength $\de$, we may think of each $\widetilde Q_j$ as a shifted, $\sqrt \de$-rescaled copy of $K_{n/2}$.
As the Favard curve length is subadditive (see Definition \ref{FavC}), it follows that 
$\disp \FavC\pr{K_n} \le \sum_{j=1}^{2^n} \FavC\pr{\widetilde Q_j}.$
Therefore, in light of this observation and the simplifying assumptions that we made regarding $\mathcal{C}$ in Section \ref{Curves}, to prove Theorem \ref{upperBd}, it suffices to prove the following proposition:

\begin{prop}[Local Favard curve length]
\label{wQjProp}
Let $\mathcal{C} = \set{\pr{t, \vp\pr{t}} : t \in I}$, where $I$ is a finite interval, $\vp$ is $C^1$, $\abs{\vp'} \le 1$, $\vp'$ is $\la$ bi-Lipschitz, and $\pr{\vp'}^{-1}$ exists.
Decompose $K_n$ as in \eqref{Kn2Defn}.
For any $j \in \set{1, \ldots, 2^n}$ and any $\eps > 0$,
\begin{equation}
\label{FavCtQj}
\FavC\pr{\widetilde Q_j} \lesssim 2^{-n} n^{\eps-1/6},
\end{equation}
where the implicit constant depends on $\mathcal{C}$ and $\eps$.
\end{prop}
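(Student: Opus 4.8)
The plan is to reduce the curve projection to an ordinary orthogonal projection on the small scale of $\widetilde Q_j$ and then invoke Theorem \ref{NPVThm}. Because $\mathcal{C} = \set{\pr{t, \vp\pr{t}} : t \in I}$ is a graph over $x$, the line $\set{x = \al}$ meets $p - \mathcal{C}$ in the single point determined by $t = p_1 - \al$; hence $\Phi_\al$ is the single-valued map $\pr{p_1, p_2} \mapsto p_2 - \vp\pr{p_1 - \al}$, defined precisely when $p_1 - \al \in I$. In particular $\FavC\pr{\widetilde Q_j} = \int_\R \abs{\Phi_\al\pr{\widetilde Q_j}} \, d\al$, and since $\widetilde Q_j$ sits in a cube $Q_j$ of sidelength $\sqrt \de = 2^{-n}$, the set of $\al$ contributing to the integral is an interval of length $\lesssim \abs{I} + 2^{-n}$, a constant depending only on $\mathcal{C}$.

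First I would linearize $\Phi_\al$ across $Q_j$. Let $c = \pr{c_1, c_2}$ be the center of $Q_j$ and set $m = m\pr{\al} := \vp'\pr{c_1 - \al}$. Since $\vp'$ is $\la$-Lipschitz, Taylor's theorem gives $\abs{\vp\pr{p_1 - \al} - \vp\pr{c_1 - \al} - m\pr{p_1 - c_1}} \le \tfrac{\la}{2}\pr{p_1 - c_1}^2 \le \tfrac{\la}{2}\,\de$ uniformly for $p \in \widetilde Q_j$. Thus $\Phi_\al$ differs from the affine functional $p \mapsto p_2 - m\,p_1 + \text{const}$ by at most $\tfrac{\la}{2}\de$ on $\widetilde Q_j$. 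The image of $\widetilde Q_j$ under this affine functional is, up to the factor $\sqrt{1 + m^2} \le \sqrt 2$ coming from $\abs{m} = \abs{\vp'} \le 1$, a translate of $\proj_{\te\pr{\al}}\pr{\widetilde Q_j}$, where $\te\pr{\al}$ is the angle of the direction $\pr{-m, 1}$. Combining these two facts, and using that orthogonal projection commutes with taking neighborhoods, I obtain the pointwise bound $\abs{\Phi_\al\pr{\widetilde Q_j}} \lesssim \abs{\proj_{\te\pr{\al}}\pr{\mathcal{N}_{C\de}\pr{\widetilde Q_j}}}$, where $\mathcal{N}_r$ denotes the $r$-neighborhood and $C$ depends on $\la$.

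Next I would change variables from $\al$ to $\te$ and rescale. Because $\vp'$ is strictly monotonic with $\abs{\vp''} \ge \la^{-1} > 0$ a.e., the map $\al \mapsto m\pr{\al}$ is monotonic with $\abs{dm/d\al} \ge \la^{-1}$, and since $d\te/dm = \pr{1 + m^2}^{-1} \in \brac{\tfrac12, 1}$ while $\abs{d\al/dm} \le \la$, the substitution $\al \mapsto \te$ has Jacobian $\abs{d\al/d\te}$ bounded above by $2\la$. Integrating the pointwise bound and substituting therefore yields $\FavC\pr{\widetilde Q_j} \lesssim \int_\Theta \abs{\proj_\te\pr{\mathcal{N}_{C\de}\pr{\widetilde Q_j}}}\, d\te \le \Fav\pr{\mathcal{N}_{C\de}\pr{\widetilde Q_j}}$ for a bounded range of angles $\Theta \su \mathbb{S}^1$, with implicit constant depending on $\la$. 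Since $\widetilde Q_j$ is a $2^{-n}$-rescaled copy of $K_{n/2}$ and $2^n \de = 2^{-n}$, scaling gives $\Fav\pr{\mathcal{N}_{C\de}\pr{\widetilde Q_j}} = 2^{-n}\Fav\pr{\mathcal{N}_{C 2^{-n}}\pr{K_{n/2}}}$. By self-similarity of $K$, the $C2^{-n}$-neighborhood of $K_{n/2}$ is contained in the coarser generation $K_{n/2 - c}$ for a constant $c = c\pr{\la}$, so Theorem \ref{NPVThm}, applied with any $p > 6$ satisfying $1/p \ge 1/6 - \eps$, gives $\Fav\pr{\mathcal{N}_{C2^{-n}}\pr{K_{n/2}}} \le \Fav\pr{K_{n/2 - c}} \lesssim n^{-1/p} \le n^{\eps - 1/6}$. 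This is exactly \eqref{FavCtQj}.

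The delicate point is the passage from $\abs{\Phi_\al\pr{\widetilde Q_j}}$ to an honest orthogonal projection. The linearization remainder is of the same order $\de = 4^{-n}$ as the size of the squares composing $\widetilde Q_j$, so one cannot simply add it as a negligible error to $\abs{\proj_\te\pr{\widetilde Q_j}}$; instead it must be absorbed by fattening $\widetilde Q_j$ to its $C\de$-neighborhood, and the whole argument only survives because, after rescaling, this fattening amounts to replacing $K_{n/2}$ with a generation that is coarser by the bounded amount $c\pr{\la}$, which Theorem \ref{NPVThm} controls without loss. The second essential ingredient is the uniform lower curvature bound $\abs{\vp''} \ge \la^{-1}$: it is precisely what makes $\al \mapsto \te$ bi-Lipschitz, keeps the Jacobian bounded, and thereby lets us trade the integral of curve projections for the genuine Favard length---this is the quantitative manifestation of ruling out the degenerate line-segment behavior noted after Theorem \ref{lowerBd}.
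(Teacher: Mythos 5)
Your overall architecture is the same as the paper's: compare $\abs{\Phi_\al\pr{\widetilde Q_j}}$ locally to an orthogonal projection at an angle $\te\pr{\al}$, use the curvature lower bound $\abs{\vp''} \ge \la^{-1}$ to change variables $\al \mapsto \te$ with Jacobian $\lesssim \la$, and finish with Theorem \ref{NPVThm}. Where you genuinely differ is in how the comparison is proved. The paper's Lemma \ref{projRelLemm} runs a Vitali covering of $\Phi_\al\pr{\widetilde Q_j}$ by $\de$-intervals, encloses the preimages in essentially disjoint strips of width $\simeq \de$ parallel to the tangent direction, and counts, obtaining the two-sided estimate $\abs{\Phi_\al\pr{\widetilde Q_j}} \simeq 2^{-n}\abs{\proj_{\te}\pr{K_{n/2}}}$ (only the upper half of which is used, per Remark \ref{two-sided}). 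Your Taylor linearization at the center of $Q_j$, with remainder $\lesssim \la\de$ absorbed into a $C\de$-fattening, is a more elementary route to the one-sided bound and is sound: the identity $\proj_\te\pr{\mathcal{N}_r\pr{E}} = \mathcal{N}_r\pr{\proj_\te E}$ that you invoke is correct, and your Jacobian bound $\abs{d\al/d\te} \le 2\la$ matches the paper's computation. One small matter the paper handles that you elide: for $\al$ near the ends of the parameter window, $c_1 - \al \notin I$ and your slope $m\pr{\al} = \vp'\pr{c_1-\al}$ is undefined even though $\Phi_\al\pr{\widetilde Q_j} \ne \emptyset$; the paper fixes this by extending $\vp$ to a $\sqrt\de$-neighborhood $\hat I$ of $I$ (a parabolic extension $\hat\vp$ preserving the bi-Lipschitz bounds). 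Alternatively, on this set of $\al$'s, of measure $O\pr{\sqrt\de}$, the crude bound $\abs{\Phi_\al\pr{\widetilde Q_j}} \le \abs{\Phi_\al\pr{Q_j}} \lesssim \sqrt\de$ contributes only $O\pr{4^{-n}}$, which is negligible against $2^{-n}n^{\eps - 1/6}$.

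The one step that fails as written is the containment $\mathcal{N}_{C2^{-n}}\pr{K_{n/2}} \su K_{n/2 - c}$. This is false for every constant $c$: in the four-corner construction each generation-$g$ square sits in a corner of its parent, hence touches the boundary of its parent (and corner squares touch the boundary of $\brac{0,1}^2$), so its $C2^{-n}$-fattening leaks into the separating gaps of the coarser generation, which have width $\simeq 4^{c}2^{-n} \gg C2^{-n}$, and out of the unit square entirely; no finite coarsening contains the neighborhood. Fortunately the repair is routine, in two ways. Cleanest: never fatten in the plane at all — since $\proj_\te \widetilde Q_j$ is a finite union of closed intervals each of length $\ge \de$ (each component contains the projection of a full $\de$-square), one has $\abs{\mathcal{N}_{C\de}\pr{\proj_\te \widetilde Q_j}} \le \pr{1 + 2C}\abs{\proj_\te \widetilde Q_j}$, so your pointwise bound becomes $\abs{\Phi_\al\pr{\widetilde Q_j}} \lesssim \abs{\proj_{\te\pr{\al}}\pr{\widetilde Q_j}}$ directly, and after the change of variables $\FavC\pr{\widetilde Q_j} \lesssim \Fav\pr{\widetilde Q_j} = 2^{-n}\Fav\pr{K_{n/2}}$, with no neighborhood of $K_{n/2}$ ever appearing. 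Alternatively, cover $\mathcal{N}_{C2^{-n}}\pr{K_{n/2}}$ by $O\pr{C^2}$ translates of $K_{n/2}$ (the same translation vectors work for every square, as all squares have sidelength $2^{-n}$) and use translation invariance and subadditivity of $\Fav$. With either repair your argument is complete and delivers \eqref{FavCtQj} exactly as in the paper.
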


One of the main tools used to prove this proposition is a quantitative comparison between each curve projection of $\widetilde Q_j$ and some angular projection of $K_{n/2}$.
The following lemma describes this relationship, which is the important idea behind the whole proof.

\begin{lem}[Comparison between curve projections and orthogonal projections]
\label{projRelLemm}
Let $\mathcal{C} = \set{\pr{t, \vp\pr{t}} : t \in I}$, where $I$ is a finite interval, $\vp$ is $C^1$, $\abs{\vp'} \le 1$, and $\vp'$ is $\la$-Lipschitz, i.e. $\abs{\vp'\pr{s} - \vp'\pr{t}} \le \la \abs{s - t}$ for every $s, t \in I$.
For any $\al \in \R$, any $j \in \set{1, \ldots, 2^n}$, and any $z_0 \in \widetilde Q_j \cap \set{\pr{\al + I} \times \R}$, there exists $\te_{z_0}\pr{\al} \in \mathbb{S}^1$ so that
\begin{align}
\abs{\Phi_\al\pr{\widetilde Q_j}} \simeq 2^{-n} \abs{\proj_{\te_{z_0}\pr{\al}}\pr{K_{n/2}}},
\label{projRelations}
\end{align}
where the implicit constant depends only on $\la$.
\end{lem}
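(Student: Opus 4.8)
The plan is to prove the two-sided estimate by comparing $\Phi_\al(\widetilde Q_j)$ not with $\proj_{\te}(K_{n/2})$ directly but with the \emph{orthogonal} projection of $\widetilde Q_j$ itself, exploiting that $\widetilde Q_j = c_j + \sqrt\de\, K_{n/2}$ is an affine copy of $K_{n/2}$, so that $\abs{\proj_\te(\widetilde Q_j)} = 2^{-n}\abs{\proj_\te(K_{n/2})}$ for every $\te$. First I would record the explicit formula for the curve projection: writing $p = (p_1,p_2)$, the set $\pr{p-\mathcal{C}}\cap\set{x=\al}$ is the single point with $y$-coordinate $p_2 - \vp(p_1-\al)$ (defined whenever $p_1 - \al \in I$), so that $\Phi_\al(p) = \set{p_2 - \vp(p_1-\al)}$ and $\Phi_\al(\widetilde Q_j) = g(\widetilde Q_j)$ for the continuous map $g(p) = p_2 - \vp(p_1-\al)$. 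Setting $z_0 = (x_0,y_0)$ and $t_0 = x_0 - \al \in I$, I linearize $\vp$ at $t_0$ and introduce the affine map $\widetilde g(p) = p_2 - \vp(t_0) - \vp'(t_0)(p_1 - x_0)$. Since $\widetilde g(p) = \text{const} + \pr{p_2 - \vp'(t_0)p_1} = \text{const} + \sqrt{1+\vp'(t_0)^2}\,\proj_{\te_{z_0}(\al)}(p)$, where $\te_{z_0}(\al)$ is the direction orthogonal to the curve's tangent at $z_0$ and $\sqrt{1+\vp'(t_0)^2}\in[1,\sqrt2]$ because $\abs{\vp'}\le 1$, the affine image $\widetilde g(\widetilde Q_j)$ has measure comparable to $\abs{\proj_{\te_{z_0}(\al)}(\widetilde Q_j)} = 2^{-n}\abs{\proj_{\te_{z_0}(\al)}(K_{n/2})}$. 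It therefore suffices to show $\abs{g(\widetilde Q_j)} \simeq \abs{\widetilde g(\widetilde Q_j)}$ with constants depending only on $\la$.

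Two ingredients drive this comparison. The first is a pointwise error bound: since $\vp'$ is $\la$-Lipschitz, Taylor's theorem with remainder gives $\abs{g(p) - \widetilde g(p)} = \abs{\vp(p_1-\al) - \vp(t_0) - \vp'(t_0)(p_1-x_0)} \le \tfrac\la2\abs{p_1-x_0}^2 \le \tfrac\la2 4^{-n}$ for every $p \in \widetilde Q_j$, since the horizontal extent of $\widetilde Q_j$ is $\sqrt\de = 2^{-n}$. Hence $g(\widetilde Q_j)$ and $\widetilde g(\widetilde Q_j)$ lie in the $\eta$-neighborhood of one another with $\eta = \tfrac\la2 4^{-n}$. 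The second, and the crux of the argument, is a lower bound on the size of the connected components of both images. Each of the $2^n$ solid sub-squares $T$ comprising $\widetilde Q_j$ has side $\de = 4^{-n}$; as $T$ is connected and $g,\widetilde g$ are continuous, $g(T)$ and $\widetilde g(T)$ are intervals. Crucially, holding $p_1$ fixed and letting $p_2$ range over a vertical edge of $T$ shows $g(T)$ contains an interval of length exactly $4^{-n}$, because the term $-\vp(p_1-\al)$ is then constant; likewise $\widetilde g(T)$ has length $(1+\abs{\vp'(t_0)})4^{-n}\ge 4^{-n}$. Thus every connected component of $g(\widetilde Q_j)$ and of $\widetilde g(\widetilde Q_j)$ has length at least $4^{-n}$, so each image has at most $4^{n}\abs{\,\cdot\,}$ components.

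Combining these, I would invoke the elementary fact that for a finite union of intervals $A\su\R$ one has $\abs{A_\eta}\le \abs{A} + 2\eta\cdot\#\set{\text{components of }A}$, where $A_\eta$ is the $\eta$-neighborhood. Applying it in both directions with $\eta = \tfrac\la2 4^{-n}$ yields $\abs{g(\widetilde Q_j)} \le \abs{\widetilde g(\widetilde Q_j)} + 2\eta\cdot 4^n\abs{\widetilde g(\widetilde Q_j)} = (1+\la)\abs{\widetilde g(\widetilde Q_j)}$ and symmetrically $\abs{\widetilde g(\widetilde Q_j)}\le(1+\la)\abs{g(\widetilde Q_j)}$. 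Chaining this with the scaling identity from the first paragraph gives $\abs{\Phi_\al(\widetilde Q_j)} = \abs{g(\widetilde Q_j)} \simeq 2^{-n}\abs{\proj_{\te_{z_0}(\al)}(K_{n/2})}$ with implicit constants controlled by $\la$ alone, which is \eqref{projRelations}.

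The main obstacle is precisely that the per-square projection length and the linearization error both live at the same scale $4^{-n}$, so a naive per-square comparison degenerates for large $\la$ (a single curved square's projection can be swamped by the error). What rescues the lower bound is the observation that the vertical direction is undistorted by $g$: every solid sub-square still projects to an interval of guaranteed length $4^{-n}$, which keeps the component count under control uniformly in $\la$. I would also flag one minor technical point to be absorbed into the proof of Proposition \ref{wQjProp}: the formula for $g$ requires $p_1 - \al\in I$ throughout $\widetilde Q_j$, which can fail for the $O(2^{-n})$-measure set of $\al$ for which $\al + I$ only partially covers $\widetilde Q_j$; these boundary values of $\al$ contribute negligibly to the $\al$-integral defining $\FavC(\widetilde Q_j)$ and do not affect the bound.
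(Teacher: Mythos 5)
Your proof is correct, but it takes a genuinely different route from the paper's. The paper works on the image side: it covers $\Phi_\al\pr{\widetilde Q_j}$ by $N$ essentially disjoint $\de$-intervals via a Vitali-type argument, pulls each interval back to a set $T_k \su Q_j$, shows (by the mean value theorem and the $\la$-Lipschitz bound on $\vp'$, the same $\la\abs{t_0+\al-z_{0,1}}\abs{t_1-t_2}\lesssim \la\de$ computation as your Taylor estimate) that each $T_k$ sits in a strip of width $O\pr{\la\de}$ parallel to the tangent direction $m_{z_0}\pr{\al}$, proves the strips have bounded overlap, and establishes a per-strip claim $\abs{\proj_{\te_{z_0}\pr{\al}}\pr{T_k \cap \widetilde Q_j}} \simeq \de$; summing over $k$ gives $\abs{\proj_{\te_{z_0}\pr{\al}}\pr{\widetilde Q_j}} \simeq N\de \simeq \abs{\Phi_\al\pr{\widetilde Q_j}}$, and it closes with the same exact-scaling identity $\abs{\proj_\te\pr{\widetilde Q_j}} = \sqrt\de\,\abs{\proj_\te\pr{K_{n/2}}}$ that you use. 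You instead compare the nonlinear projection map $g\pr{p}=p_2-\vp\pr{p_1-\al}$ globally with its affine linearization $\widetilde g$ at $z_0$, converting the pointwise error $\frac\la2\de$ into a measure comparison via the neighborhood inequality $\abs{A_\eta}\le\abs{A}+2\eta\,\#\set{\text{components}}$; the crucial common ingredient in both arguments is the $\de$-interval floor coming from undistorted vertical fibers (the paper records it as ``$\Phi_\al\pr{v}$ is a closed $\de$-interval'' for vertical $\de$-segments $v$, you use it to bound the component count by $4^n\abs{A}$). What your version buys is brevity and explicit constants --- the two-sided bound $\pr{1+\la}^{\pm 1}$ up to the $\sqrt{1+\vp'\pr{t_0}^2}\in\brac{1,\sqrt2}$ normalization --- with no covering or bounded-overlap bookkeeping; the paper's strip decomposition is heavier but more local, in the style of classical Favard-length arguments. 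One caveat you correctly flag: when $\al+I$ only partially covers the horizontal extent of $Q_j$, the map $g$ is only partially defined and the lower-bound direction of \eqref{projRelations} can degrade; the paper has the same boundary issue (its final containment $\widetilde Q_j \su \bigcup_k \pr{T_k^*\cap\widetilde Q_j}$ implicitly needs every point of $\widetilde Q_j$ to project), handles it in the proof of Proposition \ref{wQjProp} by extending $\vp$ to the $\sqrt\de$-neighborhood $\hat I$, and in any case, per Remark \ref{two-sided}, only the upper-bound direction --- which survives partial coverage in both arguments --- is actually used downstream, so your deferral of this point is harmless.
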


\begin{rem}
\label{two-sided}
The full power of this lemma is not required in our proof.
We only use that $\abs{\Phi_\al\pr{\widetilde Q_j}} \lesssim 2^{-n} \abs{\proj_{\te_{z_0}\pr{\al}}\pr{K_{n/2}}}$ to achieve our result, but we have included the two-sided estimate here anyway.
\end{rem}

\begin{proof}
Fix $j \in \set{1, \ldots, 2^n}$ and $z_0 \in \widetilde Q_j \su \R^2$.
In components, $z_0 = \pr{z_{0,1}, z_{0,2}}$.
Choose $\al \in \R$ so that $z_{0,1} \in \al + I$.
Then $\Phi_\al^{-1}\pr{\Phi_\al\pr{z_0}} = \set{\pr{\al + t, \Phi_\al\pr{z_0} + \vp\pr{t}} : t \in I}$ is the curve passing through $z_0$.
At $z_0$, the slope of the tangent to this curve is given by
$$m_{z_0}\pr{\al} = \vp'\pr{z_{0,1} - \al}.$$

First we describe the set $\Phi_\al\pr{\widetilde Q_j}$ using a $\de$-covering, where $\de = 4^{-n}$.
Since $\mathcal{C} = \set{\pr{t, \vp\pr{t}} : t \in I}$, the graph of a function, then for each point $p = \pr{p_1, p_2}$, the projection is either a singleton or the empty set.
That is,
\begin{equation}
\label{PhialDefn}
\Phi_\al\pr{p} = \left\{ \begin{array}{ll} \set{p_2 - \vp\pr{p_1 - \al}} & p_1 - \al \in I \\ \emptyset & \textrm{ otherwise} \end{array}\right..
\end{equation}
Observe that for any vertical line segment  $v= \set{\pr{q_1, q_2 + t} : 0 \le t \le \de}$ in $\R^2$ of length $\de$, $\Phi_\al\pr{v}$ is either $\emptyset$ or a closed $\de$-interval.  
Since each $\widetilde Q_j$ is a collection of $2^n$ squares of sidelength $\de$, which each contain many such line segments, then $\Phi_\al\pr{\widetilde Q_j} \su \R$ is a finite collection of closed, disjoint intervals, each having length at least $\de$.
Therefore, by a finite version of the Vitali covering lemma, there exists a disjoint collection of $N$ $\de$-intervals, $\disp \set{I_k}_{k=1}^N$, indexed in order, with the property that 
$\disp \bigsqcup_{k=1}^N I_k \su \Phi_\al\pr{\widetilde Q_j} \su \bigcup_{k=1}^N 2 I_k$.
In particular, 
\begin{equation}
\label{projSize}
\abs{\Phi_\al\pr{\widetilde Q_j}} \simeq N \de.
\end{equation}

This $\de$-covering of $\Phi_\al\pr{\widetilde Q_j}$ is now used to understand the set $\widetilde Q_j$ and determine the value of $N$.
We accomplish this by looking at the strips that contain each preimage $\Phi_\al^{-1}\pr{I_k}$.
For each $k \in \set{1, \ldots, N}$, let $T_k = \Phi_\al^{-1}\pr{I_k} \cap Q_j$.
Let $\ell_{z}$ denote the line passing through $z$ with slope $m_{z_{0}}\pr{\al}$.
We define the strip $S_k = \bigcup\set{ \ell_z : z \in T_k}$ to be the smallest strip that runs parallel to the direction $m_{z_{0}}\pr{\al}$ and contains $T_k$.

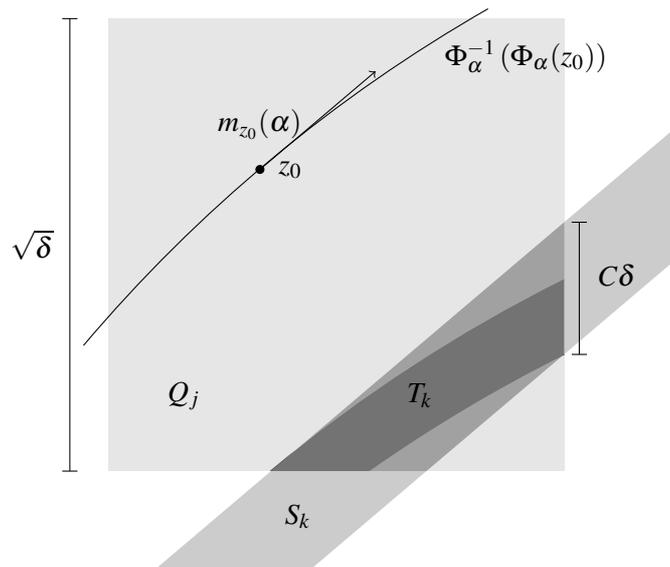
\begin{figure}[h]
\begin{tikzpicture}
\fill[black, opacity = 0.1]
(0, 0) -- (0, 6) --
(0, 6) -- (6, 6) -- 
(6, 6) -- (6, 0) -- 
(6, 0) -- (0, 0) --
cycle;
\draw[color=black] (1, 1) node {$Q_j$};
\draw [fill=black] (2, 4) circle (1.5pt);
\draw[color=black] (2.4, 4) node {$z_0$};
\draw (5, 6.1305) arc (120:140:20);
\draw[color=black] (5.5, 5.5) node {$\Phi_\al^{-1}\pr{\Phi_\al(z_0)}$};
\draw[->] (2,4) -- (3.519, 5.3);
\draw[color=black] (2, 4.6) node {$m_{z_0}(\al)$};
\fill[black, opacity = 0.5]
(3.4428, 0) arc (125.3:116.7:20) --
(6, 1.5474) -- (6,2.5474) --
(6,2.5474) arc (116.7:130:20) --
(2.144, 0) -- (3.4428,0) --
cycle;
\draw[color=black] (4.1, 1) node {$T_k$};
\fill[black, opacity = 0.3]
(2.144, 0) -- (6, 3.3) --
(6, 3.3)-- (6,2.5474) --
(6,2.5474) arc (116.7:130:20) --
cycle;
\fill[black, opacity = 0.3]
(3.4428, 0) arc (125.3:116.7:20) --
(6, 1.5474) -- (4.19 ,0) --
(4.19 ,0) --(3.4428, 0) --
cycle;
\fill[black, opacity = 0.2]
(6, 1.5474) -- (6, 3.3) --
(6, 3.3) --(7.519, 4.6) --
(7.519, 4.6) -- (7.519, 2.8474) --
(7.519, 2.8474) -- (6, 1.5474) --
cycle;
\fill[black, opacity = 0.2]
(2.144, 0) -- (4.19 ,0) --
(4.19 ,0) -- (2.671 ,-1.3) --
(2.671 ,-1.3) -- (0.625, -1.3) --
(0.625, -1.3) -- (2.144, 0) --
cycle;
\draw[color=black] (2.5, -0.6) node {$S_k$};
\draw (-0.5, 0) -- (-0.5, 6);
\draw (-0.6, 0) -- (-0.4, 0);
\draw (-0.6, 6) -- (-0.4, 6);
\draw[color=black] (-1, 3) node {$\sqrt \de$};
\draw (6.2, 1.5474) -- (6.2, 3.3);
\draw (6.1, 1.5474) -- (6.3, 1.5474);
\draw (6.1, 3.3) -- (6.3, 3.3);
\draw[color=black] (6.7, 2.6) node {$C \de$};
\end{tikzpicture}
\caption{The image of one $T_k$ enclosed in $S_k$.}
\label{TkImage}
\end{figure}

We show that each strip $S_k$ has width bounded above by $c\de$.
Recall that $\Phi_\al^{-1}\pr{\be} = \pr{\al, \be} + \mathcal{C}$.
Without loss of generality, $I_k = \brac{0, \de}$ and $Q_j \su \set{0 \le x \le \sqrt \de}$ so that
\begin{align*}
T_k &\su \Phi_\al^{-1}\pr{I_k} \cap \set{0 \le x \le \sqrt \de}
= \set{\pr{\al + t, \be + \vp\pr{t}} : t \in \brac{-\al, \sqrt \de - \al}, \be \in \brac{0,\de}}.
\end{align*}
If $z_i \in T_k$, then $z_i = \pr{\al + t_i, \be_i + \vp\pr{t_i}}$, for some $t_i \in \brac{-\al, \sqrt \de - \al}$ and some $\be_i \in \brac{0,\de}$.
The line $\ell_{z_i}$ is described by
\begin{align*}
y = - \vp'\pr{z_{0,1} - \al} \pr{\al + t_i - x} + \be_i + \vp\pr{t_i}.
\end{align*}
Therefore, given any $z_1, z_2 \in T_k$, the vertical distance between $\ell_{z_1}$ and $\ell_{z_2}$ is given by
\begin{align*}
\dist_y\pr{\ell_{z_1}, \ell_{z_2}}
&= \abs{ - \vp'\pr{z_{0,1} - \al} \pr{\al + t_1} + \be_1 + \vp\pr{t_1} + \vp'\pr{z_{0,1} - \al} \pr{\al + t_2} -  \be_2 - \vp\pr{t_2}} \\
&\le \abs{\vp\pr{t_1} - \vp\pr{t_2} - \vp'\pr{z_{0,1} - \al} \pr{t_1 - t_2} }
+ \abs{ \be_1 - \be_2} \\
&= \abs{\vp'\pr{t_0}\pr{t_1 - t_2} - \vp'\pr{z_{0,1} - \al}\pr{t_1 - t_2}} 
+ \abs{ \be_1 - \be_2} ,
\end{align*}
where we have applied the mean value theorem with $t_0$ as some point between $t_1$ and $t_2$.
The Lipschitz nature of $\vp'$ then implies that
\begin{align*}
\dist_y\pr{\ell_{z_1}, \ell_{z_2}}
&\le  \la \abs{t_0 + \al - z_{0,1}} \abs{t_1 - t_2}
+ \abs{ \be_1 - \be_2} 
\le \pr{\la + 1} \de ,
\end{align*}
where we have used that $t_0 + \al, z_{0,1} \in \brac{0, \sqrt \de}$, $t_1, t_2 \in \brac{-\al, \sqrt \de - \al}$, and $\be_1, \be_2 \in \brac{0, \de}$. 
It follows that the width of $T_k$ (measured orthogonal to $m_{z_0}\pr{\al}$, which is bounded by $1$) is also comparable to $\de$, as desired.

Now we show that the collection $\disp \set{S_k}_{k=1}^N$ is essentially disjoint.
Since $\Phi_\al^{-1}\pr{I_k} \cap \Phi_\al^{-1}\pr{I_{k'}} = \emptyset$ whenever $k \ne k'$ and each $\Phi_\al^{-1}\pr{I_k}$ has a height of $\de$, then $\dist_y\pr{\Phi_\al^{-1}\pr{I_k}, \Phi_\al^{-1}\pr{I_{k'}}} \ge \pr{\abs{k-k'} - 1} \de$.
Since $\abs{m_{z_0}\pr{\al}} \le 1$, then $\dist_{m^\perp}\pr{T_k, T_{k'}} \ge \frac 1 {\sqrt 2} \pr{\abs{k-k'} - 1} \de$, where we use $\dist_{m^\perp}$ to denote the distance measured orthogonal to $m_{z_0}\pr{\al}$.
Thus, whenever $\abs{k - k'} \ge 2\sqrt 2 c+1$, it holds that $\dist_{m^\perp}\pr{T_k, T_{k'}} \ge 2 c \de$.
From the argument in the previous paragraph, for each $k$, $T_k \su S_k$, where $S_k$ is a strip  that is parallel to the slope direction $m_{z_0}\pr{\al}$ and has a width bounded by $c\de$, as depicted in Figure \ref{TkImage}.
It follows that $S_k \cap S_{k'} = \emptyset$ if $\abs{k - k'} \ge 2\sqrt 2 c+1$.
In particular, the strips $\disp \set{S_k}_{k=1}^N$ can have at most $2 \lceil 2\sqrt 2 c+1 \rceil -1$ overlaps, as required.

We may repeat the arguments from above for the dilated intervals.
If we analogously define $T_k^* = \Phi_\al^{-1}\pr{2 I_k} \cap Q_j$, then for each $k$, $T_k^* \su S_k^* \cap Q_j$, where $S_k^*$ is a strip with width bounded by $c^* \de$ that is parallel to $m_{z_0}\pr{\al}$.
Moreover, the collection $\disp \set{S_k^*}_{k=1}^N$ is also essentially disjoint.

Define $\te_{z_0}\pr{\al} \in \mathbb{S}^1$ to be the angle that is orthogonal to a line with slope $m_{z_0}\pr{\al}$. 

\begin{clm}
For each $k$, $\abs{\proj_{\te_{z_0}\pr{\al}}\pr{T_k \cap \widetilde Q_j}} \simeq \de$ and $\abs{\proj_{\te_{z_0}\pr{\al}}\pr{T_k^* \cap \widetilde Q_j}} \simeq \de$.
\end{clm}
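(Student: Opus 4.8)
The plan is to establish the two estimates in the claim separately, working from the explicit descriptions of the two projections on $Q_j$. From \eqref{PhialDefn}, for $p = \pr{p_1, p_2}$ with $p_1 - \al \in I$ we have $\Phi_\al\pr{p} = p_2 - \vp\pr{p_1 - \al}$, while, writing $m = m_{z_0}\pr{\al} = \vp'\pr{z_{0,1} - \al}$ and recalling that $\te_{z_0}\pr{\al}$ is orthogonal to the slope direction, $\proj_{\te_{z_0}\pr{\al}}\pr{p} = \pm\pr{p_2 - m p_1}/\sqrt{1 + m^2}$ up to an additive constant. Since $\abs{m} \le 1$, the map $p_2 \mapsto \proj_{\te_{z_0}\pr{\al}}\pr{p_1, p_2}$ is affine with slope $\pr{1 + m^2}^{-1/2} \in \brac{2^{-1/2}, 1}$ along every vertical line; this elementary observation is what will drive the lower bound.

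For the upper bound I would simply reuse the strips already constructed in the proof of Lemma \ref{projRelLemm}. Since $T_k \cap \widetilde Q_j \su T_k \su S_k$ and $S_k$ is a strip parallel to the slope direction $m$ of width at most $c\de$, projecting onto the orthogonal direction $\te_{z_0}\pr{\al}$ collapses the unbounded direction of $S_k$ and returns an interval whose length is the width of $S_k$. Hence $\abs{\proj_{\te_{z_0}\pr{\al}}\pr{T_k \cap \widetilde Q_j}} \le c\de \lesssim \de$, and the identical argument applied to $S_k^*$ gives $\abs{\proj_{\te_{z_0}\pr{\al}}\pr{T_k^* \cap \widetilde Q_j}} \lesssim \de$.

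The lower bound is the crux, and the naive approach fails: although $\Phi_\al$ and $\proj_{\te_{z_0}\pr{\al}}$ differ only by the shear $\eta\pr{p_1} = \vp\pr{p_1 - \al} - m p_1$, this shear varies by as much as $\la\de$ on $Q_j$, which is comparable to $\abs{I_k} = \de$ for large $\la$ and could in principle collapse the projection. Instead I would exploit that $\widetilde Q_j$ is a union of \emph{solid} $\de$-squares (at stage $n$, $K_n$ consists of solid squares), so that, writing $\widetilde Q_j = \widetilde C_x \times \widetilde C_y$, the set $\widetilde C_y$ is a union of solid intervals of length exactly $\de$. The key step is to pass to a single vertical fiber: since $\bigsqcup_k I_k \su \Phi_\al\pr{\widetilde Q_j}$, the midpoint $\be^*$ of $I_k$ lies in $\Phi_\al\pr{\widetilde Q_j}$, so there is a point $\pr{x^*, y^*} \in \widetilde Q_j$ with $y^* - \vp\pr{x^* - \al} = \be^*$. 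The solid $\de$-interval of $\widetilde C_y$ containing $y^*$ then overlaps the $\de$-interval $\vp\pr{x^* - \al} + I_k$, which is centered at $y^*$, in a set of measure at least $\de/2$. Feeding the fiber $\set{x^*} \times \pr{\widetilde C_y \cap \pr{\vp\pr{x^* - \al} + I_k}} \su T_k \cap \widetilde Q_j$ into the affine slope bound from the first paragraph yields $\abs{\proj_{\te_{z_0}\pr{\al}}\pr{T_k \cap \widetilde Q_j}} \ge 2^{-1/2}\cdot \de/2 \gtrsim \de$, and because $T_k^* \cap \widetilde Q_j \supseteq T_k \cap \widetilde Q_j$ the lower bound for $T_k^*$ is immediate.

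I expect the main obstacle to be precisely this lower bound, and within it the recognition that one cannot argue through a pointwise comparison of the two projections because the shear term defeats it when $\la$ is large. The resolution rests on the coincidence that the vertical thickness of $T_k$ matches the finest scale $\de$ of $K_n$, so that a single fiber already meets a full solid $\de$-interval of $\widetilde C_y$ and the projection cannot degenerate.
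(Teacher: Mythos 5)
Your proposal is correct and follows essentially the same route as the paper: the upper bound comes from the strips $S_k$, $S_k^*$ of width $\lesssim \de$ already constructed in the proof of Lemma \ref{projRelLemm}, and the lower bound comes from the midpoint of $I_k$ together with a solid $\de$-scale piece of $K_n$ through the corresponding preimage point. Your vertical-fiber computation (the overlap of measure at least $\de/2$ with a solid $\de$-interval of $\widetilde C_y$, and the slope factor $\pr{1+m^2}^{-1/2} \ge 2^{-1/2}$) simply makes explicit the step the paper states as $\abs{\proj_{\te_{z_0}\pr{\al}}\pr{T_k \cap q_i}} \gtrsim \de$ for a $\de$-square $q_i$ meeting $\Phi_\al^{-1}\pr{c_k}$.
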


Fix $k$.
Since $\pr{T_k \cap \widetilde Q_j}  \su \pr{T^*_k \cap \widetilde Q_j} \su \pr{S^*_k \cap Q_j} \su S^*_k$, then 
$$\abs{\proj_{\te_{z_0}\pr{\al}}\pr{T_k \cap \widetilde Q_j}} \le \abs{\proj_{\te_{z_0}\pr{\al}}\pr{T^*_k \cap \widetilde Q_j}} \le \abs{\proj_{\te_{z_0}\pr{\al}}\pr{S^*_k}}\lesssim \de,$$
where the last inequality follows from the choice of $\te_{z_0}\pr{\al}$ and the fact that $S_k^*$ has width bounded above by $c^*\de$.

Since $I_k \su \Phi_{\al}\pr{\widetilde Q_j}$, then for every $\be \in I_k$, there exists $z \in \widetilde Q_j$ so that $\Phi_{\al}\pr{z} = \be$.
Let $c_k$ denote the midpoint of $I_k$.
As every point of $\widetilde Q_j$ is contained in a $\de$-square, then there exists a $\de$-square $q_i$ such that $q_i \cap \Phi_{\al}^{-1}\pr{c_k} \ne \emptyset$. 
It follows that
$$\abs{\proj_{\te_{z_0}\pr{\al}}\pr{T_k^* \cap \widetilde Q_j}} \ge \abs{\proj_{\te_{z_0}\pr{\al}}\pr{T_k \cap \widetilde Q_j}} \ge \abs{\proj_{\te_{z_0}\pr{\al}}\pr{T_k \cap q_i}} \gtrsim \de,$$
proving the claim.

Finally, we use the claim to conclude the proof.
Recall that, by construction, $\set{2I_k}_{k=1}^N$ forms a cover for $\Phi_{\al}\pr{\widetilde Q_j}$, so that $\disp \widetilde Q_j \su \bigcup_{k=1}^N \pr{T_k^*  \cap \widetilde Q_j}$.
Subadditivity plus an application of the claim shows that
\begin{align*}
\abs{\proj_{\te_{z_0}\pr{\al}}\pr{\widetilde Q_j}}
\le \sum_{k=1}^N \abs{\proj_{\te_{z_0}\pr{\al}}\pr{T_k^*  \cap \widetilde Q_j}}
\lesssim N \de.
\end{align*}
Since $\disp \bigsqcup_{k=1}^N I_k \su \Phi_{\al}\pr{\widetilde Q_j}$ by construction, it follows from taking inverses again that $\disp \bigsqcup_{k=1}^N \pr{T_k  \cap \widetilde Q_j} \su \widetilde Q_j$.
Since each $T_k \su S_k$, where the $S_k$ are essentially disjoint, then another application of the claim shows that
\begin{align*}
\abs{\proj_{\te_{z_0}\pr{\al}}\pr{\widetilde Q_j}}
\gtrsim \sum_{k=1}^N \abs{\proj_{\te_{z_0}\pr{\al}}\pr{T_k  \cap \widetilde Q_j}}
\gtrsim N \de.
\end{align*}
Combining the previous two inequalities shows that $\abs{\proj_{\te_{z_0}\pr{\al}}\pr{\widetilde Q_j}} \simeq N \de$.
However, recalling that $\widetilde Q_j$ is a $\sqrt \de$-scaled, shifted $K_{n/2}$, we have also that $N \de \simeq \abs{\proj_{\te_{z_0}\pr{\al}}\pr{\widetilde Q_j}} = \sqrt \de \abs{\proj_{\te_{z_0}\pr{\al}}\pr{K_{n/2}}}$.
Combining this bound with \eqref{projSize} and recalling the definition of $\de$ leads to the conclusion of the lemma.
\end{proof}

Now that we have Lemma \ref{projRelLemm}, we use it to prove Proposition \ref{wQjProp}.
In essence, we use that $\mathcal{C}$ has non-vanishing curvature to integrate the relationship from Lemma \ref{projRelLemm}.

\begin{proof}[Proof of Proposition \ref{wQjProp}]
The first step is to extend the curve $\mathcal{C}$ to ensure that all curve projections that we are working with are non-empty.
Let $\hat I$ be the $\sqrt \de$-neighborhood of $I$.
That is, if $I = \brac{a, b}$, then $\hat I = \brac{a - \sqrt \de, b + \sqrt \de}$.
Then we extend the definition of $\vp$ from $I$ to $\hat I$ so that all of the properties of $\vp$ are maintained.
That is $\hat \vp : \hat I \to \R$ is $C^1$, $\hat \vp'$ is $\la$ bi-Lipschitz, $\pr{\hat \vp'}^{-1}$ exists, and $\la \ge \abs{\hat \vp''} \ge \la^{-1} > 0$ a.e. in $\hat I$.
For example, we could set $\hat \vp\pr{t} = \left\{\begin{array}{ll} 
\vp\pr{a} + \vp'\pr{a} \pr{t - a} + \frac 1 2 \vp''\pr{a}\pr{t - a}^2 & t \in \bp{a-\de, a} \\
\vp\pr{t} & t \in \brac{a, b} \\ 
\vp\pr{b} + \vp'\pr{b} \pr{t - b} + \frac 1 2 \vp''\pr{b}\pr{t - b}^2 & t \in \pb{b, b+\de} 
\end{array} \right.$, a parabolic extension.
Note that for this choice of extension, $\abs{\hat \vp'\pr{t}} \le 1 + \la \de \le 1 + \la$, which suffices for our arguments.
Each curve projection associated this extended function is denoted by $\hat \Phi_\al$.

Now we proceed with the proof.
Choose $j \in \set{1, \ldots, 2^n}$ and $z_0 \in \widetilde Q_j$.
With $\al \in \R$ so that $z_{0,1}  \in \al + \hat I$, $\te_{z_0}\pr{\al} $ denotes the angle that is orthogonal to a line with slope $m_{z_0}\pr{\al}$.
That is, $\te_{z_0}\pr{\al} \in \pb{-\frac \pi 2, \frac \pi 2}$ is given by
\begin{equation*}
\te_{z_0}\pr{\al} = \arctan\pr{-\frac 1{\hat \vp'\pr{z_{0,1} - \al}}},
\end{equation*}
where we extend the definition of $\arctan$ so that $\arctan\pr{- \frac 1 0} = \frac \pi 2$.
Since $\hat \vp'$ is invertible, if we set 
\begin{align*}
\al_{z_0}\pr{\te}
= z_{0,1} - \pr{\hat \vp'}^{-1}\pr{- \frac{1}{\tan \te}}
= z_{0,1} - \pr{\hat \vp'}^{-1}\pr{- \cot \te},
\end{align*}
we have $\al_{z_0} = \te_{z_0}^{-1}$ and $\te_{z_0} = \al_{z_0}^{-1}$.
That is, $\al_{z_0}\pr{\te_{z_0}\pr{\al}} = \al$ and $\te_{z_0}\pr{\al_{z_0}\pr{\te}} = \te$.
Moreover, since $\hat \vp'$ is a.e. differentiable, then, where it is defined
\begin{align*}
\frac{d\al_{z_0}}{d\te} 
&= - \brac{\hat \vp''\pr{\pr{\hat \vp'}^{-1}\pr{- \cot \te}} \sin^2 \te}^{-1}
=  - \frac{1 + \brac{\hat \vp'\pr{z_{0,1} - \al_{z_0}\pr{\te}}}^2}{\hat \vp''\pr{z_{0,1} - \al_{z_0}\pr{\te}}}. %
\end{align*}
Note that $\set{\al \in \R : \widetilde Q_j \cap \set{\pr{\al + I} \times \R} \ne \emptyset} \su \set{\al \in \R : z_{0} \in \set{\pr{\al + \hat I} \times \R}} =: A_j$.
By set inclusion and the fact that $\Phi_\al\pr{S} \su \hat \Phi_\al\pr{S}$, we see that
\begin{align*}
\FavC\pr{\widetilde Q_j}
&= \int_{\R} \abs{\Phi_\al\pr{\widetilde Q_j}} d\al
= \int_{\set{\al \in \R : \widetilde Q_j \cap \set{\pr{\al + I} \times \R}}} \abs{\Phi_\al\pr{\widetilde Q_j}} d\al
\le \int_{A_j} \abs{\hat \Phi_\al\pr{\widetilde Q_j}} d\al  \\
&\simeq 2^{-n} \int_{A_j} \abs{\proj_{\te_{z_0}\pr{\al}}\pr{K_{n/2}}} d\al,
\end{align*}
where the last line follows from \eqref{projRelations} in Lemma \ref{projRelLemm}.
Since $\al \in A_j$ if and only if $\te_{z_0}\pr{\al} \in T\pr{\de} :=\set{ \arctan\pr{-\frac 1{\hat\vp'\pr{\be}}} : \be \in \hat I}$, then applying a change of variables and the lower bound on $\abs{\vp''}$ shows that
\begin{align*}
\FavC\pr{\widetilde Q_j}
&\lesssim 2^{-n} \int_{T\pr{\de}} \abs{\proj_{\te}\pr{K_{n/2}}} \abs{\brac{\hat \vp''\pr{\pr{\hat \vp'}^{-1}\pr{- \frac{1}{\tan \te}}} \sin^2 \te}^{-1}} d \te \\
&\le 2^{-n} \la \int_{\pb{-\pi/2, \pi/2}} \abs{\proj_{\te}\pr{K_{n/2}}} d \te 
\le  2^{-n} \la \Fav\pr{K_{n/2}}.
\end{align*}
Applying Theorem \ref{NPVThm} leads to \eqref{FavCtQj}, thereby proving the proposition.
\end{proof}

\begin{rem}
If we only had an upper bound for $\abs{\frac{d\al_{z_0}}{d\te}}$, instead of the exact presentation, then the result of Proposition \ref{wQjProp} would still hold.
\end{rem}

\section{The lower bound}
\label{LBS}

In this section, we prove the lower bound that is described by Theorem \ref{lowerBd}.
The starting point of our proof is motivated by the ideas that appear in \cite{BaV10}.
Namely, we introduce a counting function and invoke the Cauchy-Schwarz inequality.
Then the remainder of the proof is concerned with calculating good estimates for the measures of overlapping sets.

We fix $n \in \N$ and proceed to estimate $\FavC(K_n)$ from below. 
As described in Section \ref{Curves}, it suffices to assume that $\mathcal{C} = \set{\pr{t, \vp\pr{t}} : t \in I}$, where $I$ is a finite interval, $\vp$ is $C^1$, $\abs{\vp'} \le 1$ in $I$, and $\vp'$ is $\la$ bi-Lipschitz.
Therefore, for a.e. $s \in I$, $\la^{-1} \le \abs{\vp''\pr{s}} \le \la$ and $\vp''$ does not change sign.
We assume that $\vp'' > 0$ a.e. since the argument for $\vp'' < 0$ is analogous.

Now we introduce the counting function.
Note that $\disp K_n= \bigsqcup_{i = 1}^{4^n} Q_i$, where each $Q_i$ is a cube of sidelength $4^{-n}$. 
For $z \in \R^2$, let $\mathcal{C}_z = z + \mathcal{C}$, the curve positioned at $z$. 
The counting function $f_{n}: \R^2 \to \Z$ is defined by
\begin{equation}
\label{fnDefn}
f_{n}(z)=\#\{\text{cubes }{Q}\in K_n: Q\cap \mathcal{C}_z\ne\emptyset\}.
\end{equation}
We claim that 
$\disp \int_{\R^2} f_{n}(z)\,dz \simeq 1.$
Observe that $\disp f_{n} = \sum_{i = 1}^{4^n} f^i_{n}$, where 
$$f^i_{n}(z)=\left\{ \begin{array}{ll} 1 & \text{ if } Q_i \cap \mathcal{C}_z \ne \emptyset \\ 0 & \text{ otherwise.} \end{array} \right..$$
Since $Q_i \cap \mathcal{C}_z \ne \emptyset$ if and only if $z \in Q_i - \mathcal{C}$, then $f_n^i = \chi_{Q_i - \mathcal{C}}$.
As $\abs{Q_i - \mathcal{C}} \simeq 4^{-n}$ for each $i$, where the implicit constant depends only on $\mathcal{C}$, then the claim follows. 

As in \cite{BaV10}, we apply Cauchy-Schwarz to obtain
\begin{align*}
1 
&\simeq \int_{\R^2}f_n(z)\,dz
\le \abs{\set{z\in \R^2: \mathcal{C}_z\cap K_n\ne\emptyset}}^{1/2} \pr{\int_{\R^2} \abs{f_{n}(z)}^2 dz}^{1/2}
\\
&=\pr{\FavC\pr{K_n}}^{1/2} \pr{\int_{\R^2} \abs{f_{n}(z)}^2 dz}^{1/2}.
\end{align*}
Since $\disp \int_{\R^2} \abs{f_{n}(z)}^2 dz \ne 0$, this gives the lower bound
\begin{equation}
\label{FavLB}
\FavC\pr{K_n} \gtrsim \pr{\int_{\R^2} \abs{f_{n}(z)}^2 dz}^{-1}.
\end{equation}
Therefore, to prove Theorem \ref{lowerBd}, it suffices to estimate $\disp \int_{\R^2} \abs{f_{n}(z)}^2 dz$ from above. 
In particular, we need to establish the following.

\begin{prop}[$L^2$ upper bound for the counting function]
\label{fnL2Prop}
For $f_n$ as defined in \eqref{fnDefn}, it holds that
\begin{equation}
\label{fnL2Bd}
\int_{\R^2} \abs{f_{n}(z)}^2 dz \lesssim n.
\end{equation}
\end{prop}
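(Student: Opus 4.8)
The plan is to expand the square, reducing the estimate to a geometric overlap bound between pairs of thickened curves. As shown above, $f_n = \sum_{i=1}^{4^n} \chi_{Q_i - \mathcal{C}}$, so
$\int_{\R^2} \abs{f_n}^2 \,dz = \sum_{i,j=1}^{4^n} \abs{\pr{Q_i - \mathcal{C}} \cap \pr{Q_j - \mathcal{C}}}$.
Each $Q_i - \mathcal{C}$ is a $\de$-tube (width $\simeq \de = 4^{-n}$, length $\simeq 1$) about a translate of $-\mathcal{C}$, so the diagonal terms contribute $\sum_i \abs{Q_i - \mathcal{C}} \simeq 4^n \cdot \de \simeq 1 \lesssim n$, and the whole task is to control the off-diagonal overlaps and show their total is $\lesssim n$.

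The heart of the argument is a pointwise overlap estimate that exploits the non-vanishing curvature. For cubes $Q_i, Q_j$ whose centers differ by $v = \pr{v_1, v_2}$, the two tubes are translates of one another by $v$, so the overlap is governed by how a translate of $-\mathcal{C}$ meets $-\mathcal{C}$. Writing $-\mathcal{C}$ as a graph $y = \psi(x)$ with $\psi''$ of one sign and $\la^{-1} \le \abs{\psi''} \le \la$, the vertical gap $g(x) = \psi(x) - \psi(x - v_1) - v_2$ has derivative $g'(x) = \psi'(x) - \psi'(x - v_1)$, which by the bi-Lipschitz hypothesis satisfies $\la^{-1}\abs{v_1} \le \abs{g'(x)} \le \la \abs{v_1}$ for every $x$. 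Hence $g$ is strictly monotone, the two curves cross at most once, and the crossing is transversal with angle $\simeq \abs{v_1}$. The tubes therefore overlap only where $\abs{g(x)} \lesssim \de$, an $x$-interval of length $\simeq \de / \max\pr{\abs{v_1}, \de}$, giving $\abs{\pr{Q_i - \mathcal{C}} \cap \pr{Q_j - \mathcal{C}}} \lesssim \de^2 / \max\pr{\abs{v_1}, \de}$. Moreover, since the range of $x \mapsto \psi(x) - \psi(x - v_1)$ over the domain has length $\simeq \abs{v_1}$, a crossing exists at all only when $v_2$ lies in an interval of length $\simeq \max\pr{\abs{v_1}, \de}$ determined by $v_1$; all implicit constants depend only on $\la$ and the length of $\mathcal{C}$.

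With these two facts I would run the Cantor counting. Fix $Q_i$ and group the remaining cubes by the dyadic scale of their horizontal separation, $\abs{v_1} \simeq 4^{-m}$ for $m = 1, \ldots, n$ (the same-column case $v_1 = 0$ forces $\abs{v_2} \lesssim \de$ and contributes only $O(\de)$). By the self-similar structure of $C_n$, the number of columns at horizontal distance $\simeq 4^{-m}$ from that of $Q_i$ is $\simeq 2^{n-m}$; within each such column the restriction on $v_2$ confines $y_j$ to an interval of length $\simeq 4^{-m}$, which meets $\lesssim 2^{n-m}$ of the (centers of the) Cantor points of $C_n$. Each contributing pair has overlap $\simeq \de^2 4^{m}$, so the scale-$m$ contribution is $\simeq 2^{n-m} \cdot 2^{n-m} \cdot \de^2 4^{m} = 4^n \de^2 = 4^{-n}$, independent of $m$. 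Summing over the $n$ scales gives $\sum_j \abs{\pr{Q_i - \mathcal{C}} \cap \pr{Q_j - \mathcal{C}}} \lesssim n \, 4^{-n}$, and summing over the $4^n$ choices of $i$ yields $\int_{\R^2}\abs{f_n}^2 \lesssim n$.

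I expect the main obstacle to be the overlap estimate, and in particular combining its two outputs correctly: the transversality bound $\de^2/\max\pr{\abs{v_1},\de}$ together with the confinement of $v_2$ to a window of width $\simeq \max\pr{\abs{v_1},\de}$. Both rely on the non-vanishing curvature — convexity to force a single crossing, and the bi-Lipschitz lower bound on $\psi'$ to pin both the crossing angle and the width of the relevant $v_2$-window to $\simeq \abs{v_1}$ — and it is precisely the interplay of these two effects that renders the scale-$m$ contribution independent of $m$, so that $n$ scales produce the linear factor $n$ rather than a power of $4^n$. Keeping all constants uniform in $n$ (depending only on $\la$ and $\mathcal{C}$) is the delicate technical point.
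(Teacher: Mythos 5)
Your proposal is correct and follows essentially the same route as the paper: expand $\int_{\R^2}\abs{f_n}^2$ into the pairwise overlaps $p_{i,j}=\abs{\pr{Q_i-\mathcal{C}}\cap\pr{Q_j-\mathcal{C}}}$, use the curvature to bound each overlap at horizontal separation $\simeq 4^{-k}$ by $\simeq \de^2 4^{k}=4^{k-2n}$, count admissible pairs at each scale via the self-similarity of $C_n$, and note that each of the $n$ scales contributes the same amount, yielding the linear factor $n$. The only differences are in implementation: the paper proves the overlap bound by a discrete ``good pairs of parameters'' stepping argument (Lemma \ref{pairBoundsLemma}) rather than your (arguably cleaner) monotone gap function $g$ with $\abs{g'}\simeq\abs{v_1}$, it uses only the coarse constraint $k\le\ell$ coming from $\abs{\vp'}\le 1$ in place of your refined $v_2$-window (both give the same count), and it counts $\pr{k,\ell}$-pairs exactly by induction (Corollary \ref{pairCountingCor}) rather than per-cube.
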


Recalling the decomposition of $f_n$ from above, we have
\begin{align}
\int_{\R^2} \abs{f_{n}(z)}^2 dz
&= \sum_{i,j = 1}^{4^n}  \int_{\R^2} f^i_{n}\pr{z} f^j_n\pr{z} dz
= \sum_{i,j = 1}^{4^n}  \int_{\R^2} \chi_{Q_i - \mathcal{C}}\pr{z} \chi_{Q_j - \mathcal{C}}\pr{z} dz
= \sum_{i,j = 1}^{4^n} p_{i,j},
\label{fnBound}
\end{align}
where for each pair of cubes $(Q_i, Q_j)$, we have introduced the quantity
$$p_{i,j}=|\pr{Q_i - \mathcal{C}}\cap \pr{Q_j - \mathcal{C}}|.$$

If $i = j$, then it is clear that $p_{i,i}=|Q_i - \mathcal{C}| \simeq 4^{-n}$.
For $k \in \set{0, 1, \ldots, n}$, define the intervals 
$$I_k = \left\{\begin{array}{ll} \brac{\frac 1 {2 \cdot 4^k} + \frac 1 {4^n}, \frac 1 {4^k} - \frac 1 {4^n}} & \text{ if } k < n \\ \set{0} & \text{ if } k = n\end{array}\right. 
\quad \text{ and } \quad 
J_k = \left\{\begin{array}{ll}\brac{\frac 1 {2 \cdot 4^k}, \frac 1 {4^k}} & \text{ if } k < n \\ \brac{0, \frac 1 {4^n}} & \text{ if } k = n \end{array}\right. .$$
For any cube, we can write $Q_i = \pr{x_i, y_i} + \brac{- \frac 1{2 \cdot 4^n}, \frac 1{2 \cdot 4^n}}^2$, where $\pr{x_i, y_i}$ denotes the center of the cube.

\begin{defn}[$\pr{k, \ell}$-pairs]
We say $(Q_i, Q_j)$ is a {\em$\pr{k, \ell}$-pair} for some $k, \ell \in \set{0, 1, \ldots, n}$ if $\abs{x_i - x_j} \in I_k$ and $\abs{y_i - y_j} \in I_\ell$.
It follows that whenever $\pr{\al, \be}\in Q_i$ and $\pr{\ga, \de} \in Q_j$, $\abs{\ga - \al} \in J_k$ and $\abs{\de - \be} \in J_\ell$.
\end{defn}

\begin{example}
For any $i$, the pair $\pr{Q_i, Q_i}$ is an $\pr{n,n}$-pair.
\end{example}

In order to proceed with the proof, we must be able to bound each $p_{i,j}$ from above.
We do this in two steps: first we bound $p_{i,j}$ whenever $\pr{Q_i, Q_j}$ is a $\pr{k, \ell}$-pair, then we count all such pairs.
The following two lemmas give the required quantitative estimates.

\begin{lem}[Measures of overlapping sets]
Let $\pr{Q_i, Q_j}$ be a $\pr{k,\ell}$-pair for some $k, \ell \in \set{0, 1, \ldots, n}$.
If $k \le \ell$, then $p_{i,j} \lesssim 4^{k-2n}$; otherwise, if $k > \ell$, then $p_{i,j} = 0$.
\label{pairBoundsLemma}
\end{lem}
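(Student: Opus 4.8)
The plan is to recognize $p_{i,j}$ as the overlap of two translates of a single fattened curve and then convert the uniform lower bound on $\abs{\vp''}$ (equivalently, the bi-Lipschitz property of $\vp'$) into a bound on that overlap. Let $Q_0 = \brac{-\frac{1}{2\cdot 4^n}, \frac{1}{2\cdot 4^n}}^2$ be the cube centered at the origin, so that $Q_i = \pr{x_i, y_i} + Q_0$ and $Q_i - \mathcal{C} = \pr{x_i, y_i} + W$, where $W := Q_0 - \mathcal{C}$ is the $4^{-n}$-fattening of the reflected curve $-\mathcal{C}$. Since Lebesgue measure is translation invariant, subtracting $\pr{x_j, y_j}$ gives
$$p_{i,j} = \abs{\pr{v + W} \cap W}, \qquad v = \pr{a, b} := \pr{x_i - x_j, y_i - y_j}.$$
The two alternatives in the lemma reflect whether these fattened curves can meet at all and, if so, how large the overlap is.

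First I would treat the case $k > \ell$, showing the intersection is empty. Any $z \in \pr{Q_i - \mathcal{C}} \cap \pr{Q_j - \mathcal{C}}$ can be written as $z = q_i - \pr{t, \vp\pr{t}} = q_j - \pr{s, \vp\pr{s}}$ with $q_i \in Q_i$, $q_j \in Q_j$, and $s, t \in I$, so the two coordinate differences of $q_i - q_j$ equal $t - s$ and $\vp\pr{t} - \vp\pr{s}$. The $\pr{k,\ell}$-pair hypothesis then forces $\abs{t-s} \in J_k$ and $\abs{\vp\pr{t} - \vp\pr{s}} \in J_\ell$, hence $\abs{t-s} \le 4^{-k}$ and $\abs{\vp\pr{t} - \vp\pr{s}} \ge \tfrac 1 2 4^{-\ell}$. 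But $\abs{\vp'} \le 1$, so the mean value theorem gives $\abs{\vp\pr{t} - \vp\pr{s}} \le \abs{t-s} \le 4^{-k}$, and for $k > \ell$ this contradicts $\tfrac 1 2 4^{-\ell} > 4^{-k}$. Thus no such $z$ exists and $p_{i,j} = 0$.

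For $k \le \ell$ I would estimate $p_{i,j}$ by slicing in the first coordinate and applying Fubini. For each fixed $z_1$, using only $\abs{\vp'} \le 1$, the vertical slice $\set{z_2 : \pr{z_1, z_2} \in W}$ is a single interval of length comparable to $4^{-n}$ whose center lies within $O\pr{4^{-n}}$ of $-\vp\pr{-z_1}$, while the corresponding slice of $v + W$ is an interval of the same length centered within $O\pr{4^{-n}}$ of $b - \vp\pr{a - z_1}$. The two slices can overlap only when their centers are $O\pr{4^{-n}}$-close, that is, when
$$\abs{G\pr{-z_1} - b} \lesssim 4^{-n}, \qquad G\pr{\tau} := \vp\pr{\tau + a} - \vp\pr{\tau}.$$
Because $\vp'$ is $\la$ bi-Lipschitz, $\abs{G'\pr{\tau}} = \abs{\vp'\pr{\tau + a} - \vp'\pr{\tau}} \ge \la^{-1}\abs{a}$, so $G$ is strictly monotone and the set of admissible $z_1$ has measure $\lesssim \la\, 4^{-n}/\abs{a}$. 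Since each nonempty slice contributes length $\lesssim 4^{-n}$,
$$p_{i,j} \lesssim \frac{\la\, 4^{-n}}{\abs{a}}\cdot 4^{-n} = \frac{\la\, 4^{-2n}}{\abs{a}} \lesssim \la\, 4^{k-2n},$$
where the final step uses $\abs{a} \in I_k$, so $\abs{a} \ge \tfrac 1 2 4^{-k}$ when $k < n$. The degenerate value $a = 0$ occurs only for $k = n$, which with $k \le \ell \le n$ forces $Q_i = Q_j$; there $p_{i,i} \simeq 4^{-n} = 4^{k-2n}$, as already recorded.

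The main obstacle is the $k \le \ell$ estimate, and within it the honest verification that each vertical slice of $W$ is a single interval of length $\simeq 4^{-n}$ with a well-defined center up to $O\pr{4^{-n}}$, so that the monotonicity of $G$ furnished by the non-vanishing curvature can be turned into the measure bound on admissible slices. Geometrically, the lower bound on $\abs{\vp''}$ guarantees that the two translated curves cross transversally at an angle $\simeq \abs{a} \simeq 4^{-k}$, so the tube overlap behaves like the intersection of two strips of width $4^{-n}$ meeting at that angle, of area $\simeq 4^{-2n}/4^{-k} = 4^{k-2n}$. The one delicate point is bookkeeping the $O\pr{4^{-n}}$ slack in the curve parameter, so that alignment is governed by $G\pr{-z_1}$ rather than by $G$ evaluated at the exact parameter; this is precisely what allows the single monotone function $G$ to control the entire slice count.
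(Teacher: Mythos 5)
Your proof is correct, and while the vanishing case $k > \ell$ coincides with the paper's argument (mean value theorem plus $\abs{\vp'} \le 1$ forcing $\abs{\de - \be} \le \abs{\ga - \al}$), your main estimate takes a genuinely different route. The paper fixes one ``good pair of parameters'' $\pr{s_0, t_0}$ producing a point of $\pr{Q_i - \mathcal{C}} \cap \pr{Q_j - \mathcal{C}}$, steps it discretely by increments of $4^{-n}$, and proves by induction (mean value theorem plus the bi-Lipschitz bound on $\vp'$) that $\vp\pr{t_m} - \vp\pr{s_m}$ drifts by $\simeq 4^{-k-n}$ per step; this caps the number of admissible steps at $\simeq 4^k$, hence the good parameter set has measure $\simeq 4^{k-n}$, and the bound $p_{i,j} \lesssim 4^{k-2n}$ then follows from a geometric conversion: arclength is proportional to parameter range, so the intersection has width $\lesssim 4^{k-n}$ and height $\le 4^{-n}$. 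You instead make the same transversality mechanism continuous: after reducing to $\abs{\pr{v+W} \cap W}$ by translation invariance, you slice vertically and observe that overlap of the two $4^{-n}$-thick slices is governed by the single function $G\pr{\tau} = \vp\pr{\tau + a} - \vp\pr{\tau}$, whose derivative satisfies $\abs{G'} \ge \la^{-1}\abs{a} \gtrsim \la^{-1} 4^{-k}$ by the bi-Lipschitz hypothesis (and has fixed sign since $G'$ is continuous and nonvanishing), so the set of admissible slices has measure $\lesssim \la 4^{-n}/\abs{a}$ and Fubini finishes. Your version buys several things: it replaces the paper's two-sided induction and the somewhat informal width/height step with a one-line monotonicity argument; it makes the constant's dependence on $\la$ explicit; and it actually proves the finer bound $p_{i,j} \lesssim \la 4^{-2n}/\abs{x_i - x_j}$, from which the lemma follows via $\abs{x_i - x_j} \ge \frac{1}{2} 4^{-k}$. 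What the paper's stepping argument buys in exchange is two-sided information (it locates an interval $\brac{L,U}$ with $U \lesssim 4^k$, $L \gtrsim -4^k$ of good steps, tracking the exact drift rate), which is more than is needed for the upper bound. The one point you correctly flag but should execute carefully is the boundary bookkeeping: the slice-center description via $-\vp\pr{-z_1}$ only makes literal sense for $-z_1 \in I$, so near the endpoints of $I$ you should evaluate at the nearest admissible parameter; this perturbs the alignment condition by $O\pr{4^{-n}}$ in $z_1$, which is harmless since $4^{-n} \le 4^{-n}/\abs{a}$. Your degenerate case $a = 0$ is also handled consistently with the paper: it forces $k = \ell = n$, hence $Q_i = Q_j$ and $p_{i,i} = \abs{Q_i - \mathcal{C}} \simeq 4^{-n}$.
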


\begin{proof}
Let $\pr{Q_i, Q_j}$ be a $\pr{k,\ell}$-pair.
If $k, \ell = n$, then the result follows from the explanation before the statement, so we assume that $Q_i$ and $Q_j$ are distinct, and then either $k$ or $\ell$ belongs to $\set{0, 1, \ldots, n-1}$.

Observe that $z \in Q - \mathcal{C}$ iff there exists $s \in I$ so that $z + \pr{s, \vp\pr{s}} \in Q$.
Thus, $z \in \pr{Q_i - \mathcal{C}} \cap \pr{Q_j - \mathcal{C}}$ iff there exists $s, t \in I$ so that for some $\pr{\al, \be} \in Q_i$ and $\pr{\ga, \de} \in Q_j$, 
$$\pr{\al - s, \be - \vp\pr{s}} = z = \pr{\ga - t, \de - \vp\pr{t}}.$$
By comparing the coordinates, we see that 
\begin{align*}
&t - s = \ga - \al \\
&\vp\pr{t} - \vp\pr{s} = \de - \be. 
\end{align*}
An application of the mean value theorem shows that for some $\hat s$ between $s$ and $t$,
\begin{align*}
\abs{\vp'\pr{\hat s}}
= \frac{\abs{\vp\pr{t} - \vp\pr{s}}}{\abs{t - s}}
= \frac{\abs{\de - \be}}{\abs{\ga - \al}}. 
\end{align*}
Since $\abs{\vp'\pr{\hat s}} \le 1$ for all $\hat s \in I$, then in order for such a pair of parameters $s$ and $t$ to exist, we must have $\abs{\de - \be} \le \abs{\ga - \al}$.
Since $\pr{Q_i, Q_j}$ is assumed to be a $\pr{k, \ell}$-pair, then $\abs{\ga - \al} \in J_k$ and $\abs{\de - \be} \in J_\ell$, so we see that $k \le \ell$.
Roughly speaking, this means that $p_{i,j}$ is non-zero when the line joining the centers of $Q_i$ and $Q_j$ is closer to being horizontal than vertical.
In particular, if $k > \ell$, then $p_{i,j} = 0$.

Assume that $i$ and $j$ are chosen so that $\pr{Q_i - \mathcal{C}} \cap \pr{Q_j - \mathcal{C}} \ne \emptyset$. 
As shown above, this means that there exist $s_0, t_0 \in I$ and $k \le \ell$ so that 
\begin{align}
&t_0 - s_0 \in \brac{x_j - x_i - \frac 1 {4^n},x_j - x_i + \frac 1 {4^n}} \su J_k \nonumber \\
&\abs{\vp\pr{t_0} - \vp\pr{s_0}} \in \brac{\abs{y_j - y_i} - \frac 1 {4^n}, \abs{y_j - y_i} + \frac 1 {4^n}} \su  J_\ell,
\label{yBds}
\end{align}
where we have assumed, as we may, that $Q_j$ is to the right of $Q_i$. 
Since we have assumed that $Q_i$ and $Q_j$ are distinct, then $k$ belongs to $\set{0, 1, \ldots, n-1}$.

We call $\pr{s, t}$ a {\bf good pair of parameters} if they give rise to a point in the intersection $\pr{Q_i - \mathcal{C}} \cap \pr{Q_j - \mathcal{C}}$.
Let $\mathcal{G} \su I \times I$ denote the set of all good pairs of parameters.
Then $\pr{s_0, t_0} \in \mathcal{G}$.
Now we seek to determine the measure of all $s$ and $t$ for which $\pr{s, t} \in \mathcal{G}$.
We come up with our bound by stepping the pair $\pr{s_0, t_0}$ forward and backward in small steps of length $4^{-n}$.

For $j \in \Z$, let $s_j = s_0 + j 4^{-n}$ and $t_j = t_0 + j 4^{-n}$.
Observe that $t_j - s_j = t_0 - s_0$ for all $j \in \Z$.
\begin{clm}
Whenever $m \in \Z$ is such that $s_{m}, t_{m} \in I$, it holds that $\vp\pr{t_{m}} - \vp\pr{s_{m}} \simeq \vp\pr{t_0} - \vp\pr{s_0} +m 4^{-k-n}$.
\end{clm}

It is clear that this statement holds for $m = 0$.
We first prove by induction that the claim holds for all $m \in \N$ such that $s_{m}, t_{m} \in I$.
The mean value theorem asserts that for some $\hat s_{m} \in \pr{s_{m-1}, s_{m}}$ and some $\hat t_{m} \in \pr{t_{m-1}, t_{m}}$, we have
\begin{align*}
\vp\pr{t_{m}} - \vp\pr{s_{m}}
&= \brac{\vp\pr{t_{m}} - \vp\pr{t_{m-1}}} - \brac{\vp\pr{s_{m}} - \vp\pr{s_{m-1}}} + \brac{\vp\pr{t_{m-1}} -  \vp\pr{s_{m-1}}} \\
&= \vp'\pr{\hat t_{m}}\pr{t_{m} - t_{m-1}} - \vp'\pr{\hat s_{m}}\pr{s_{m} - s_{m-1}} + \brac{\vp\pr{t_{m-1}} - \vp\pr{s_{m-1}}}\\
&\simeq \brac{\vp'\pr{\hat t_{m}} - \vp'\pr{\hat s_{m}}} 4^{-n} + \vp\pr{t_0} - \vp\pr{s_0} + \pr{m-1} 4^{-k-n},
\end{align*}
where we have applied the inductive hypothesis in the last step.
Since $\hat t_{m} - \hat s_{m} \in \brac{t_{m-1} - s_{m}, t_{m} - s_{m-1}} \su \brac{t_0 - s_0 - 4^{-n}, t_0 - s_0 + 4^{-n}}$, then $\hat t_{m} - \hat s_{m} \simeq 4^{-k}$ and the bi-Lipschitz condition on $\vp'$ combined with the assumption that $\vp'$ is increasing implies that $\vp'\pr{\hat t_{m}} - \vp'\pr{\hat s_{m}} \simeq 4^{-k}$.
It follows that $\vp\pr{t_{m}} - \vp\pr{s_{m}} \simeq \vp\pr{t_0} - \vp\pr{s_0} + m 4^{-k-n}$, as claimed.

For $m \in -\N$ such that $s_{m}, t_{m} \in I$, there exists $\hat s_{m} \in \pr{s_{m}, s_{m+1}}$ and $\hat t_{m} \in \pr{t_{m}, t_{m+1}}$ so that
\begin{align*}
\vp\pr{t_{m}} - \vp\pr{s_{m}}
&= \brac{\vp\pr{s_{m+1}} - \vp\pr{s_{m}}} - \brac{\vp\pr{t_{m+1}} - \vp\pr{t_{m}}} + \brac{\vp\pr{t_{m+1}} - \vp\pr{s_{m+1}} } \\
&= \vp'\pr{\hat s_{m}}\pr{s_{m+1} - s_{m}} - \vp'\pr{\hat t_{m}}\pr{t_{m+1} - t_{m}} + \brac{\vp\pr{t_{m+1}} - \vp\pr{s_{m+1}} } \\
&\simeq - \brac{\vp'\pr{\hat t_{m}} - \vp'\pr{\hat s_{m}}} 4^{-n} + \vp\pr{t_0} - \vp\pr{s_0} + \pr{m+1} 4^{-k-n},
\end{align*}
where we have invoked the inductive hypothesis.
Arguing as before, we see that the claim follows for $m \in - \N$ as well.

If $\pr{s_{m}, t_{m}} \in \mathcal{G}$, then by \eqref{yBds} we must have that $\abs{\abs{\vp\pr{t_{m}} - \vp\pr{s_{m}}} - \abs{y_j - y_i}} \le \frac 1 {4^n}$.
It follows from the claim that there exists $U \lesssim 4^{k}$ and $L \gtrsim - 4^{k}$ (not necessarily integers) so that for all $\ga \in \brac{L, U}$, $\pr{s_0 + {\ga} 4^{-n}, t_0 + {\ga} 4^{-n}} \in \mathcal{G}$.
For $\ga \notin  \brac{L, U}$, we either have that the pair does not give rise to an intersection or at least one of the functions is not defined at the corresponding input.
In other words, $\pr{s, s + \pr{t_0 - s_0}} \in \mathcal{G}$ iff $s- s_0 \in \brac{L 4^{-n}, U 4^{-n}}$.
Note that if $\pr{s, t} \in \mathcal{G}$, then $\abs{\pr{s - t} - \pr{s_0 - t_0}} \simeq 4^{-n}$.
It follows that
\begin{align*}
\abs{\set{s : \pr{s, t} \in \mathcal{G} \text{ for some $t \in I$}}} 
\simeq &\abs{\set{t : \pr{s, t} \in \mathcal{G} \text{ for some $s \in I$}}} 
\simeq 4^{k-n}.
\end{align*}

\begin{figure}[h]
\begin{tikzpicture}
\fill[black, opacity = .3]
(0, 0) -- (0, 1) --
(0, 1) -- (1, 1) -- 
(1, 1) -- (1, 0) -- 
(1, 0) -- (0, 0) --
cycle;
\draw[color=black] (0.5, 0.5) node {$Q_i$};
\fill[black, opacity = .3]
(12, 0) -- (12, 1) --
(12, 1) -- (13, 1) -- 
(13, 1) -- (13, 0) -- 
(13, 0) -- (12, 0) --
cycle;
\draw[color=black] (12.5, 0.5) node {$Q_j$};
\fill[black, opacity = 0.2]
(0, 5) arc (90:60:20) --
(10, 2.32) -- (11, 3.32) --
(11, 3.32) arc (60:90:20) --
(1, 6) -- (0, 5)  --
cycle;
\draw[color=black] (1.5, 5.5) node {$Q_i - \mathcal{C}$};
\fill[black, opacity = 0.2]
(13, 5) arc (90:120:20) --
(3, 2.32) -- (2, 3.32) --
(2, 3.32) arc (120:90:20) --
(12, 6) -- (13,5) --
cycle;
\draw[color=black] (11.5, 5.5) node {$Q_j - \mathcal{C}$};
\draw[color=black] (6.5, 4.5) node {$\pr{Q_i - \mathcal{C}} \cap \pr{Q_j - \mathcal{C}}$};
\draw[color=black] (6.5, 1.5) node {$c4^{-k}$};
\draw[-|] (6.9,1.5) -- (12.5, 1.5);
\draw[-|] (6.1,1.5) -- (0.5, 1.5);
\draw[color=black] (-0.5, 0.5) node {$4^{-n}$};
\draw[-|] (-0.5,0.7) -- (-0.5, 1);
\draw[-|] (-0.5,0.3) -- (-0.5, 0);
\draw[color=black] (6.5, 3.5) node {$C 4^{k-n}$};
\draw[-|] (7,3.5) -- (8.5, 3.5);
\draw[-|] (6,3.5) -- (4.5, 3.5);
\draw[color=black] (4, 4.5) node {$4^{-n}$};
\draw[-|] (4,4.7) -- (4, 5);
\draw[-|] (4,4.3) -- (4, 4);
\end{tikzpicture}
\caption{The image of the intersecting set for a $\pr{k, n}$-pair.}
\label{pijImage}
\end{figure}
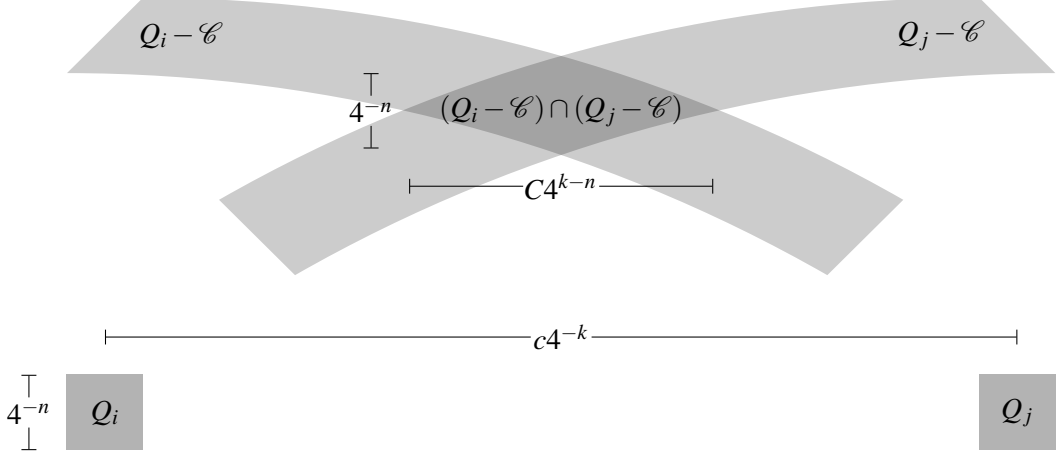

Since the arclength of a piece of $\mathcal{C}$ is proportional to the corresponding parameter range, then we deduce that the width of the intersection of $\pr{Q_i - \mathcal{C}} \cap \pr{Q_j - \mathcal{C}}$ is also bounded above by $C 4^{k-n}$.
Since the height of the intersection of $\pr{Q_i - \mathcal{C}} \cap \pr{Q_j - \mathcal{C}}$ is at most $4^{-n}$, then $\abs{\pr{Q_i - \mathcal{C}} \cap \pr{Q_j - \mathcal{C}}} \lesssim 4^{k-2n}$, as required.
See Figure \ref{pijImage} for a visual in the case where $\ell = n$.
\end{proof}

The next step is to obtain a count on the number of $\pr{k, \ell}$-pairs in $K_n$.
If $\disp C_n = \bigsqcup_{i=1}^{2^n} I_i$, where each $I_i$ denotes an interval of length $4^{-n}$, then we say that $\pr{I_i, I_j}$ is a {\bf $k$-pair} if for any $a \in I_i$ and $b \in I_j$, we have $\abs{a - b} \in J_k$.
It suffices to count the number of $k$-pairs in $C_n$, the $n^{\text{th}}$ generation of the middle-half Cantor set.

\begin{lem}[Pair counting in $C_n$]
For $k \in \set{0, 1, \ldots, n-1}$, $C_n$ contains $2^{2n - 1 -k}$ $k$-pairs, while $C_n$ contains $2^n$ $n$-pairs.
\end{lem}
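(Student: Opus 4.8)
The plan is to coordinatize the intervals of $C_n$ by their defining digit strings and to reduce the notion of a $k$-pair to a purely combinatorial condition on those strings. Each interval of $C_n$ has the form $I_i = \brac{\sum_{j=1}^n a_j 4^{-j}, \sum_{j=1}^n a_j 4^{-j} + 4^{-n}}$ for a unique string $\pr{a_1, \ldots, a_n} \in \set{0,3}^n$; write $x_i = \sum_{j=1}^n a_j 4^{-j} + \frac 1 2 4^{-n}$ for its center. As observed immediately after the definition of $\pr{k,\ell}$-pairs, the requirement that $\abs{a - b} \in J_k$ for all $a \in I_i$, $b \in I_j$ is equivalent to the single condition $\abs{x_i - x_j} \in I_k$ on the centers, so it suffices to compute the possible center separations.

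First I would compute $\abs{x_i - x_j}$ in terms of the two strings. Writing the second interval's string as $\pr{b_1, \ldots, b_n}$ and setting $\eps_j = \pr{a_j - b_j}/3 \in \set{-1, 0, 1}$, the difference of centers is $3 \sum_{j=1}^n \eps_j 4^{-j}$. Let $k+1$ be the first index at which the two strings differ, so that $\eps_1 = \cdots = \eps_k = 0$ and $\eps_{k+1} = \pm 1$. Factoring out the leading term $4^{-\pr{k+1}}$ and bounding the geometric tail $\sum_{j \ge k+2} 4^{-j}$ from both sides, I would show that $\abs{x_i - x_j}$ attains its maximum $\frac 1 {4^k} - 4^{-n}$ when all of the remaining signs agree with $\eps_{k+1}$, and its minimum $\frac 1 {2 \cdot 4^k} + 4^{-n}$ when they all oppose it. These are exactly the endpoints of $I_k$, so $\abs{x_i - x_j} \in I_k$ precisely when the first disagreement of the two strings occurs at index $k+1$. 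Since the intervals $\set{I_k}_{k=0}^{n-1}$ are pairwise disjoint, every ordered pair of distinct intervals of $C_n$ is therefore a $k$-pair for exactly one $k \in \set{0, \ldots, n-1}$.

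With this equivalence in hand the count is elementary. For $k < n$, an (ordered) $k$-pair corresponds to an ordered pair of strings that agree on their first $k$ digits and differ at digit $k+1$: there are $2^k$ choices for the common prefix, $2$ ordered choices for the differing pair of digits at position $k+1$, and $\pr{2^{n-k-1}}^2$ choices for the two free suffixes, giving $2^k \cdot 2 \cdot 2^{2\pr{n-k-1}} = 2^{2n-1-k}$. For $k = n$, distinct intervals of $C_n$ are separated by a gap of at least $2 \cdot 4^{-n}$, so the condition $\abs{a - b} \in J_n = \brac{0, 4^{-n}}$ forces $i = j$; there are exactly $2^n$ such diagonal pairs.

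The main obstacle is the sharpness of the two-sided geometric estimate in the second step. The endpoints of the interval swept out by $\abs{x_i - x_j}$ must be computed exactly, not merely up to constants, so that they coincide with the endpoints of $I_k$, and one must check that truncating the sum at index $n$ produces precisely the $\pm 4^{-n}$ corrections built into $I_k$. Once this exact matching is verified, the reduction to the first-disagreement index and the counting argument are routine.
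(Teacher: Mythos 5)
Your proof is correct, but it takes a genuinely different route from the paper's. The paper argues by induction on $n$, exploiting the self-similarity $C_{n+1} = \pr{\frac 1 4 \cdot C_n} \cup \pr{\frac 3 4 + \frac 1 4 \cdot C_n}$: for $k \in \set{1, \ldots, n}$, each $k$-pair of $C_{n+1}$ is a rescaled $\pr{k-1}$-pair inside one of the two copies of $C_n$, giving $2 \cdot 2^{2n - 1 - \pr{k-1}} = 2^{2\pr{n+1} - 1 - k}$ of them, while the $0$-pairs are exactly the $2 \cdot 2^n \cdot 2^n$ ordered pairs with one interval in each copy, and the diagonal supplies the $2^{n+1}$ $\pr{n+1}$-pairs. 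You instead label each interval by its address in $\set{0,3}^n$ and classify pairs directly; your exact endpoint computations do check out, since $3 \sum_{j=k+1}^{n} 4^{-j} = 4^{-k} - 4^{-n}$ and $3 \cdot 4^{-\pr{k+1}} - 3 \sum_{j=k+2}^{n} 4^{-j} = \frac{1}{2 \cdot 4^k} + 4^{-n}$ are precisely the endpoints of $I_k$, so two distinct intervals form a $k$-pair exactly when their digit strings first disagree at position $k+1$, and the string count $2^k \cdot 2 \cdot 2^{2\pr{n-k-1}} = 2^{2n-1-k}$ is consistent with the paper's ordered-pair convention (your $n=1$ numbers agree with the paper's base case, and your gap argument correctly isolates the $2^n$ diagonal $n$-pairs). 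One small precision: for the converse direction of your equivalence (a $k$-pair must have first disagreement at index $k+1$) you should invoke the pairwise disjointness of the $J_k$'s rather than of the $I_k$'s, since the defining condition for a $k$-pair is stated via $J_k$; applying that condition to the two centers, whose distance lies in $I_{k'} \subseteq J_{k'}$ for the first-disagreement index $k'$, forces $k = k'$ --- an immediate fix, as the $J_k$'s are indeed disjoint. As for what each approach buys: the paper's induction is shorter and lets the self-similar structure do all the work, whereas your argument is more informative --- it exhibits an explicit bijection between $k$-pairs and pairs of addresses first disagreeing at position $k+1$, and in particular makes transparent that every ordered pair of distinct intervals is a $k$-pair for exactly one $k \in \set{0, \ldots, n-1}$, a partition fact the paper uses implicitly when it decomposes the sum over all pairs into sums over $\pr{k, \ell}$-pairs in the proof of Proposition \ref{fnL2Prop}.
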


\begin{proof}
We proceed by induction on $n$.

Since $C_1$ contains $2$ intervals, then $C_1$ contains $4$ pairs.
There are $2$ $1$-pairs (non-distinct pairs) and all of the remaining pairs (of which there are $2$) are $0$-pairs since the distances between their centers equals $3/4$.

Now assume that the statement holds for $C_n$.
Since $C_{n+1}$ contains $2^{n+1}$ intervals, then $C_{n+1}$ contains $2^{n+1}$ $\pr{n+1}$-pairs.
Now consider $k \in \set{1, \ldots, n}$. 
Since $C_{n+1} = \pr{\frac 1 4\cdot C_n} \cup \pr{\frac 3 4 + \frac 1 4\cdot C_n}$, 
then each $k$-pair in $C_{n+1}$ corresponds to a $\pr{k-1}$-pair in one of the two copies of $C_n$.
Thus, by the inductive hypothesis, $C_{n+1}$ contains $2 \cdot 2^{2n - 1 -\pr{k-1}} = 2^{2\pr{n+1} - 1 -k}$ $k$-pairs.
Each of the $0$-pairs comes from choosing one interval in $\frac 1 4\cdot C_n$ and the other in $\frac 3 4 + \frac 1 4\cdot C_n$.
Since $C_n$ contains $2^n$ intervals, there are $2 \cdot 2^n \cdot 2^n = 2^{2n +1}$ $0$-pairs in $C_{n+1}$,
completing the proof.
\end{proof}

Using the count for $k$-pairs in $C_n$, we immediately arrive at the following.

\begin{cor}[Pair counting in $K_n$]
For $k, \ell \in \set{0, 1, \ldots, n-1}$, $K_n$ contains $2^{4n - 2 -k -\ell}$ $\pr{k,\ell}$-pairs.
For $k \in \set{0, 1, \ldots, n-1}$, $K_n$ contains $2^{3n - 1 -k}$ $\pr{k,n}$-pairs.
Further, $K_n$ contains $4^n$ $\pr{n,n}$-pairs.
\label{pairCountingCor}
\end{cor}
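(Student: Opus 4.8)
The plan is to exploit the product structure $K_n = C_n \times C_n$ to reduce the two-dimensional count to a product of two one-dimensional counts, to which the preceding Pair Counting Lemma in $C_n$ applies directly.

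First I would set up the correspondence between cubes of $K_n$ and pairs of intervals of $C_n$. Writing $\disp C_n = \bigsqcup_{a=1}^{2^n} I_a$ as in the previous lemma, every cube $Q \in K_n$ is a product $I_a \times I_b$ for a unique pair of indices $(a,b)$; that is, the $x$-extent of $Q$ is one interval of $C_n$ and its $y$-extent is another. Thus a cube $Q_i$ is determined by a pair $(a_i, b_i)$, and a pair of cubes $(Q_i, Q_j)$ is determined by the two interval-pairs $(I_{a_i}, I_{a_j})$ in the horizontal copy of $C_n$ and $(I_{b_i}, I_{b_j})$ in the vertical copy.

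Next I would observe that the $(k,\ell)$-pair condition decouples across these two factors. By definition, $(Q_i,Q_j)$ is a $(k,\ell)$-pair exactly when $\abs{x_i - x_j} \in I_k$ and $\abs{y_i - y_j} \in I_\ell$, where $(x_i,y_i)$ is the center of $Q_i$. Since $x_i$ is the center of $I_{a_i}$ and depends only on the horizontal data, while $y_i$ is the center of $I_{b_i}$ and depends only on the vertical data, the condition $\abs{x_i - x_j} \in I_k$ is precisely the statement that $(I_{a_i}, I_{a_j})$ is a $k$-pair in $C_n$, and $\abs{y_i - y_j} \in I_\ell$ is precisely the statement that $(I_{b_i}, I_{b_j})$ is an $\ell$-pair in $C_n$. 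The two requirements involve disjoint sets of coordinates, so the number of $(k,\ell)$-pairs in $K_n$ factors as the number of $k$-pairs in $C_n$ times the number of $\ell$-pairs in $C_n$.

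Finally I would substitute the counts from the preceding lemma into this product. For $k,\ell \in \set{0,\ldots,n-1}$ this gives $2^{2n-1-k}\cdot 2^{2n-1-\ell} = 2^{4n-2-k-\ell}$; for $k \in \set{0,\ldots,n-1}$ and $\ell = n$ it gives $2^{2n-1-k}\cdot 2^n = 2^{3n-1-k}$; and for $k=\ell=n$ it gives $2^n \cdot 2^n = 4^n$, matching the three claimed counts. The only genuinely substantive point is verifying the decoupling in the second step, namely that the horizontal and vertical separation conditions are independent because the coordinates are drawn from two independent copies of $C_n$; once this is in place the result is an immediate consequence of the one-dimensional lemma, so I do not expect any serious obstacle.
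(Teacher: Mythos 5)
Your proposal is correct and is essentially the paper's own argument: the paper derives the corollary immediately from the one-dimensional lemma via the product structure $K_n = C_n \times C_n$, with exactly the decoupling you describe (the horizontal condition $\abs{x_i - x_j} \in I_k$ is the $k$-pair condition on the $x$-intervals, the vertical one is the $\ell$-pair condition on the $y$-intervals, so the counts multiply). Your write-up just makes explicit the factorization that the paper leaves as an immediate consequence.
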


Now we have all of the ingredients needed to prove Proposition \ref{fnL2Prop} and therefore complete the proof of Theorem \ref{lowerBd}.

\begin{proof}[Proof of Proposition \ref{fnL2Prop}]
To simplify notation, we write $\pr{Q_i, Q_j} \in \mathcal{P}_{k, \ell}$ if $\pr{Q_i, Q_j}$ is a $\pr{k, \ell}$-pair.
From Lemma \ref{pairBoundsLemma}, recall that $\mathcal{P}_{k, \ell} = \emptyset$ if $k > \ell$.
Returning to equation \eqref{fnBound}, we have
\begin{align*}
\int_{\R^2} \abs{f_{n}(x)}^2 dx
&= \sum_{i,j = 1}^{4^n} p_{i,j}
= \sum_{k=0}^n \sum_{\ell=k}^n \sum_{\pr{Q_i, Q_j} \in \mathcal{P}_{k, \ell}} p_{i,j}
\lesssim  \sum_{k=0}^n \sum_{\ell=k}^n \sum_{\pr{Q_i, Q_j} \in \mathcal{P}_{k, \ell}} 4^{k-2n}, 
\end{align*}
where we have applied Lemma \ref{pairBoundsLemma}.
Continuing on, we see that
\begin{align*}
\int_{\R^2} \abs{f_{n}(x)}^2 dx
&\lesssim \sum_{k=0}^{n-1} \sum_{\ell=k}^{n-1} \sum_{\pr{Q_i, Q_j} \in \mathcal{P}_{k, \ell}} 4^{k-2n} 
+ \sum_{k=0}^{n-1} \sum_{\pr{Q_i, Q_j} \in \mathcal{P}_{k, n}} 4^{k-2n} 
+ \sum_{\pr{Q_i, Q_j} \in \mathcal{P}_{n, n}} 4^{-n} \\
&= \sum_{k=0}^{n-1} \sum_{\ell=k}^{n-1} 2^{4n-2-k-\ell} 4^{k-2n} 
+ \sum_{k=0}^{n-1} 2^{3n-1-k} 4^{k-2n} 
+ 4^n 4^{-n},
\end{align*}
where we have invoked Corollary \ref{pairCountingCor}.
Further simplifying shows that
\begin{align*}
\int_{\R^2} \abs{f_{n}(x)}^2 dx
&\lesssim \sum_{k=0}^{n-1} \sum_{\ell=k}^{n-1} 2^{k-\ell -2} 
+ \sum_{k=0}^{n-1} 2^{k-n-1} 
+ 1 
\lesssim n,
\end{align*}
completing the proof.
\end{proof}

\def\cprime{$'$}


\end{document}